\newtheorem{theorem}{Theorem}[section]
\newtheorem{proposition}{ Proposition}[section]
\newtheorem{lemma}{ Lemma}[section]
\newtheorem{corollary}{Corollary}[section]
\newtheorem{definition}{Definition}[section]
\theoremstyle{remark}
\def \1{\mathbb {1}}
\def \RM{\mathbb {R}}
\def \NM{\mathbb{N}}
\def \ZM{\mathbb{Z}}
\def \CM{\mathbb{C}}
\def \QM{\mathbb{Q}}
\def \PM {\mathbb{P}}
\def \intr {{\rm int\,}}
\def \Der {{\rm Der\,}}
\def \Aut {{\rm Aut\,}}
\def \p {{\rm exp\,}}
\def \Id {{\rm Id\,}}
\def \d{\partial}
\def\a{\alpha}
\def\b{\beta}
\def\e{\varepsilon}  
\def\g{\gamma}
\def\l{\lambda}
\def\p{\varphi}
\def\G{\Gamma}   
\def \s{\sigma}
\def \to{\longrightarrow} 
\def \alg{\mathfrak{g}}
\def \< {{\thengle }}
\def \> {{\rangle }}
\def \( {\left( }
\def \) {\right) }
\newcommand{\Bt}{{\mathcal B}}
\newcommand{\Ct}{{\mathcal C}}
\newcommand{\Ht}{{\mathcal H}}
\newcommand{\It}{{\mathcal I}}
\newcommand{\Lt}{{\mathcal L}}
\newcommand{\Mt}{{\mathcal M}}
\newcommand{\Ot}{{\mathcal O}}
\renewcommand{\mod}{{\rm  mod\,}}
\title[DEGENERATIONS OF INVARIANT LAGRANGIAN MANIFOLDS]{DEGENERATIONS OF INVARIANT LAGRANGIAN~MANIFOLDS}
\author{  Mauricio  Garay}
\address{ Institut f\"ur Mathematik\\
FB 08 - Physik, Mathematik und Informatik\\
Johannes Gutenberg-Universit\"at Mainz\\
Staudinger Weg 9\\
55128 Mainz.}
\begin{document}
\begin{abstract} We consider a pair $(H,I)$ where $I$ is an involutive ideal of a Poisson algebra and $H \in I$. We show that if $I$ defines a $2n$-gon singularity then, under arithmetical conditions on $H$, any deformation of $H$ can "integrated" as a deformation of $(H,I)$.
\end{abstract}
\maketitle
\section*{Introduction}
In a family of Lagrangian submanifolds of a symplectic manifold, the fibres can degenerate and acquire singularities. One of the most typical degenerations is given by the {\em $2n$-gon singularity}: 
$$L_\e=\{ (q,p) \in \CM^{2n}:p_1^2+q_1^2=\e_1,\dots,p_n^2+q_n^2=\e_n \},$$
with  $\e=(\e_1,\dots,\e_n) .$ The name of this singularity comes from the fact that $L_0$ is the cone over a polytope obtained by taking the convex envelope of $2n$-points in  $\PM^{2n-1}$.

The geometry of this Lagrangian degeneration is quite obvious. For $n=1$,   the Riemann surface
$$R_\e:=\{ (q,p) \in \CM^2: q^2+p^2=\e\} $$ 
is a cylinder, provided that $\e \neq 0$. It retracts to its real part and thus has the homotopy type of a circle.  For $\e=0$, the Riemann surface degenerates into a cone which again retracts on its real part: a single point. This is the classical $A_1$ singularity~\cite{AVGL1}.

Now for $n>1$,  the complex Lagrangian variety $L_\e$ is the product
$$R_{\e_1} \times R_{\e_2} \times \dots \times R_{\e_n} ,$$ and therefore it retracts to a real $n$-dimensional torus $(S^1)^n$
provided that all $\e_i$ are non zero. When some of the $\e_i$ tend to zero, this smooth complex manifold degenerates into a singular Lagrangian variety, which also retracts on its real part:   a torus of dimension strictly lower than~$n$. Note that, like in the one-dimensional case, this real torus is 
the singular locus of our complex Lagrangian variety.
 
 Lagrangian $2n$-gon singularities are special fibres of Liouville integrable systems. In the seventies', Vey proved that inside the class of Liouville integrable systems these singularities are symplectically stable~\cite{Vey}. However, it is known since Poincar\'e that integrable systems are an exceptional class of dynamical systems:  under a small perturbation an integrable Hamiltonian motion will, as a general rule, not be integrable~\cite{Poincare_Methodes}. So it could happen that the degenerations provided by integrable systems are highly non-generic and exotic, even among Lagrangian degenerations.

Quite opposite to this point of view, Colin de Verdi\`ere asked if Lagrangian complete intersection singularities can always be expressed as special fibres of integrable systems~\cite{Colin}. As far as I know, this question is still open. However, in~\cite{moment}, I proved a variant of the conjecture, namely that the singularities observed generically in integrable systems occur generically as degenerations of Lagrangian manifolds. 

For $2n$-gon singularities, this result was, in fact, first stated by Nguyen D'uc  and Pham  but they mistakenly used Mather's preparation theorem in their proof~\cite{Pham_Legendre,Mather_stability}. The differential relations involved in the definition of a Lagrangian manifolds  do not permit to apply directly Mather-Thom theory. Using  Sevenheck-van Straten's  Lagrangian deformation complex, the method can nevertheless be adapted~\cite{lagrange,moment,VS}~(see also~\cite{Colin_San,VanStraten_Lagrangian}). 
 
 Now this result still left open the question of the genericity of Lagrangian degenerations as invariant manifolds in Hamiltonian systems.      
  In this paper, I will show that the $2n$-gon degeneration is an unavoidable Lagrangian singularity: as a family of invariant Lagrangian manifolds, they define symplectically stable degenerations over some Cantor type subset. Away from the singular fibres, we recover the standard KAM theorem, which states that most smooth invariant Lagrangian fibres are stable~\cite{Arnold_KAM,Kolmogorov_KAM,Moser_KAM}.
  
   The stability of the $2n$-gon degeneration does NOT always hold for the real $C^\infty$ case: as shown by Miranda and Vu Ngoc depending on its real form, it might happen  that the $2n$-gon degeneration is not stable~\cite{Miranda}. Nevertheless, to go from the complex analytic case to the real one is a detail: whatever real form we consider, real analytic variants for the stability of the $2n$-gon degenerations are straightforward.

 \section{Lagrangian deformations}
 Deformations of Poisson algebras is a well established subject going back to the work of Lichnerowicz~\cite{Lichnerowicz_Poisson}.
 
 Let $A $ be an algebra over  $\CM$. We say that $A$ is a {\em Poisson algebra}  if it is endowed of a linear antisymmetric biderivation
 $$A \times A \to A,\ (f,g) \mapsto \{ f,g \}  $$
 which satisfies the Jacobi identity. The tensor product of a Poisson algebra $A$ with an algebra $B$ is a Poisson algebra for the bracket:
 $$\{a_1 \otimes b_1,a_2 \otimes b_2 \}:=\{ a_1,a_2 \} \otimes (b_1 b_2). $$
 It is called the {\em central extension} of $A$ with respect to $B$. In the sequel, if $A$ has a Poisson structure, then we implictly consider $A \otimes B$ with this central-extension Poisson structure.
 
 The most standard example of such an algebra is the polynomial ring in $2n$ variables 
 $$\CM[q,p]:=\CM[q_1,\dots,q_n,p_1,\dots,p_n] $$
 with the symplectic Poisson structure
$$\{f,g \}=\sum_{i=1}^n \d_{q_i}f\d_{p_i}g-\d_{q_i}g\d_{p_i}f .$$
 In this paper, we will only be concerned with central extension of symplectic structures. 
 
 An ideal $I$ of a Poisson algebra is called {\em involutive} if:
 $$\{ I,I \} \subset I. $$
 and we consider flat deformations of involutive ideals inside Poisson algebra.

So if $f_1,\dots,f_n$ generates an involutive ideal $I$ then there exists
$c_{ij}^k \in A$ such that
$$\{ f_i,f_j\}=\sum_{k \geq 0}c_{ij}^k f_k. $$ 
In practise, our ideals are complete intersection ideals, thus flat involutive deformations are simply deformations of the functions which define the 
 ideal and which remain in involution.
 
Let now $B$ be an algebra and consider  the Poisson algebra $A \otimes B$. Let $\It$ be an involutive flat deformation of $I$ over $B$. We say that it is a {\em trivial Poisson deformation} if there exists a Poisson automorphism $\p \in \Aut(A \otimes B )$ such that 
$$\p(\It)=I  \otimes 1 .$$
We say that $I$ is {\em  rigid over $B$} if any of its deformation over $B$ is trivial.

These notions extend naturally from polynomial rings to analytic ones.
Let us denote by $\Ot_k$ or by $\CM\{ x \}$ when we want to single out the variables, the algebra of convergent power series in the variable $x=(x_1,\dots,x_k)$. 

This algebra has a natural topological structure as direct limit of Banach spaces (see~\cite{Grothendieck_EVT}). Because the algebra $\Ot_k \otimes \Ot_l $ is not isomorphic to $\Ot_{k+l}$, in the sequel, we consider topological tensor products rather than usual tensor products (see~\cite{Grothendieck_PTT}).
As both $\Ot_k \hat \otimes \Ot_l$ and $\Ot_{k+l}$ are completions of the space of polynomials, they are isomorphic.

 Similarly, if $X$ and $Y$ are compact spaces, by Stone theorem, multiplication gives an isomorphism of topological vector spaces between $C^0(X,\RM) \hat \otimes C^0(Y,\RM)$ and 
$C^0(X \times Y,\RM)$. These are the only properties of topological tensor products that we shall need and can be taken as a definition, if the reader is not comfortable with this notion.

Although deformation theory of involutive ideals in Poisson algebras can be established in great generality, only a few examples are understood. We now give the simplest non trivial case.

 \begin{theorem}[\cite{lagrange}] Let us consider the ring of analytic power series in $2n$-variables $\CM\{ q,p \}$ together with its symplectic Poisson structure.
The involutive ideal of $\CM\{\l, q,p \}$ generated by the polynomials 
$p_1q_1-\l_1,\dots,p_nq_n-\l_n $
is rigid over $\CM\{ t_1,\dots,t_k \}$ for any $k \geq 0$.
\end{theorem}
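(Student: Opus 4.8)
The plan is to realise this rigidity as the vanishing of the first cohomology of the Lagrangian deformation complex of Sevenheck--van Straten attached to the ideal, to note that for these particular generators that complex collapses to a Chevalley--Eilenberg complex whose differential involves only \emph{integer} weights, and then to integrate the resulting infinitesimal rigidity to an honest analytic normalisation --- the absence of small divisors being exactly what makes the last step go through.

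First I would fix the local picture. Write $f_i=p_iq_i-\l_i$. The sequence $(f_1,\dots,f_n)$ is regular in $\CM\{\l,q,p\}\tensor\CM\{t\}=\CM\{\l,q,p,t\}$, with quotient $R:=\CM\{q,p,t\}$ under $\l_i\mapsto p_iq_i$; hence, as recalled in the text, every flat involutive deformation over $\CM\{t\}$ is presented by generators $F_i=f_i+G_i$ with $G_i\in(t)$ and $\{F_i,F_j\}\in(F_1,\dots,F_n)$, and the deformation is trivial precisely when some Poisson automorphism $\varphi$ of $\CM\{\l,q,p,t\}$ over $\CM\{t\}$ carries $(F_1,\dots,F_n)$ to $(f_1,\dots,f_n)$. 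Since $\l$ and $t$ are central, such a $\varphi$ is the composite of a change of the central coordinates $\l\mapsto\L(\l,t)$ with a $(\l,t)$-dependent symplectomorphism of the $(q,p)$-space; infinitesimally the latter is the flow of a Hamiltonian $H\in\CM\{\l,q,p,t\}$, and one may moreover replace the $F_i$ by an invertible linear combination of themselves, that is, alter them modulo $(F_1,\dots,F_n)$.

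The cohomological heart is as follows. Put $X_i:=\{f_i,\cdot\}=-q_i\d_{q_i}+p_i\d_{p_i}$; these derivations commute, they preserve $(f_1,\dots,f_n)$, and they make $R$ into a $\ZM^n$-graded ring $R=\widehat{\bigoplus}_{w\in\ZM^n}R_w$ on whose weight-$w$ part $X_i$ acts as multiplication by $w_i$. Unwinding the description above, infinitesimal involutive deformations of $(f_1,\dots,f_n)$ modulo trivial ones form $Z^1/(B^1+(R_0)^n)$, where $(Z^\bullet,B^\bullet)$ is the Chevalley--Eilenberg complex of the abelian Lie algebra $\CM X_1\oplus\dots\oplus\CM X_n$ with coefficients in $R$, and the extra summand $(R_0)^n$ --- with $R_0=\Ker X_1\cap\dots\cap\Ker X_n=\CM\{p_1q_1,\dots,p_nq_n,t\}$ --- records what is absorbed by the shifts $\l\mapsto\L$. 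As the grading is diagonalising, the complex splits as the product over $w\in\ZM^n$ of its weight-$w$ parts, and the weight-$w$ part is, up to tensoring with $R_w$, the Koszul-type complex on $\Lambda^\bullet(\CM^n)^*$ with differential $\eta\mapsto w\wedge\eta$, which is acyclic for $w\neq0$; thus only $w=0$ contributes, giving $H^1=(R_0)^n$, which is precisely the image of the central shifts. Hence infinitesimal involutive deformations modulo trivial ones vanish. Decisively, for $w\neq0$ the contracting homotopy is contraction along an index $i$ with $w_i\neq0$ divided by $w_i$, so it is a bounded operator of norm $\le1$ on each Banach block of $R$.

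To integrate this, expand everything by total degree in $t=(t_1,\dots,t_k)$ and assume inductively that $(F_1,\dots,F_n)$ already agrees with $(f_1,\dots,f_n)$ modulo degree $N$; by involutivity the degree-$N$ part of the deformation is a $1$-cocycle of the complex above, hence a coboundary by the vanishing just proved, so the explicit homotopy furnishes a degree-$N$ Hamiltonian and a degree-$N$ central shift whose induced Poisson automorphism cancels it modulo degree $N+1$. Composing all these automorphisms gives a formal Poisson automorphism over $\CM[[t_1,\dots,t_k]]$ trivialising the deformation; and since the homotopy does not increase norms, a routine majorant estimate --- of exactly the kind permitted by the direct-limit-of-Banach-spaces structure of $\Ot_k$ --- shows that this automorphism converges on a polydisc and is therefore analytic. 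This yields rigidity over $\CM\{t_1,\dots,t_k\}$ for every $k\ge0$. The one real obstacle is conceptual: the differential relations defining an involutive (Lagrangian) family make it impossible to apply Mather's preparation theorem directly, as in the flawed Nguyen D'uc--Pham argument, so the Poisson and involutivity constraints must be propagated through the entire recursion --- which is exactly what the Lagrangian deformation complex is built to do. Once the torus symmetry is used the analytic side is easy, nonzero weights being bounded away from $0$; the sole genuine bookkeeping is the shrinking of polydiscs in the majorant argument.
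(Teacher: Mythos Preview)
The paper does not prove this theorem: it is quoted from the reference \cite{lagrange} and followed only by a paraphrase (``In concrete terms\dots''), so there is no in-paper proof to compare against. What the surrounding text does tell us is that the method in \cite{lagrange} is precisely the one you invoke --- the Sevenheck--van Straten Lagrangian deformation complex, introduced because the differential constraints on involutive/Lagrangian families block a direct use of Mather's preparation theorem.

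Your outline is consistent with that and is essentially correct. The identification of the infinitesimal deformation space with a Chevalley--Eilenberg/Koszul complex for the commuting derivations $X_i=\{p_iq_i,\cdot\}$, the $\ZM^n$-weight decomposition, and the observation that the nonzero-weight homotopy divides only by nonzero \emph{integers} (hence is norm-bounded, no small divisors) are exactly the points that make this case tractable, and they match what the paper later contrasts with the genuinely arithmetic situation of Theorem~\ref{T::stable}. The one place where your write-up is thinner than it should be is the convergence step: ``a routine majorant estimate'' hides the fact that the infinite composition $\cdots e^{v_2}e^{v_1}$ must be shown to stabilise on a fixed polydisc, and for that you need not only that the homotopy is bounded but also that applying $e^{v_N}$ does not spoil the estimates already achieved at lower degrees (control of the feedback into higher $t$-degrees). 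This is indeed standard once the homotopy is $0$-bounded, but it deserves one explicit sentence rather than an appeal to routine.
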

In concrete terms, if we take convergent power series
$$F_1,\dots,F_n \in  \CM\{t, \l, q,p \}$$
with $F_i(t=0,\l,q,p)=p_iq_i-\l_i$ that generate an involutive ideal then there exists a Poisson automorphism 
$$\p:\CM\{t, \l, q,p \}\to \CM\{t, \l, q,p \} $$ such that
$$\p(F_i) =p_iq_i-\l_i $$
for any $i=1,\dots,n$.

\section{Invariant Lagrangian manifolds}
Let $A$ be a topological Poisson algebra and $I$ an involutive ideal. As $I$ is involutive, any element $H \in I$ defines a derivation
$$A/I \to A/I,\ f \mapsto \{ H,f \} $$ 
and this derivation is unchanged if we add an element of $I^2$:
$$\{ H+\sum_{i=1}^k a_ib_i,f \}=\{ H,f \} +\sum_{i=1}^k a_i \{ b_i,f \}+\sum_{i=1}^k b_i\{ a_i,f \}=  \{ H,f \}\ (\mod I)  $$
for any $a_1,\dots,a_k$, $b_1,\dots,b_k \in I$. This shows that the conormal ideal $I/I^2$ is mapped to the derivations of $A/I$ via the mapping
$$I/I^2 \to \Der(A/I),\ H \mapsto \{H,-\}.  $$

Conversely given a function $H \in A$ any involutive ideal which contains $H$ defines an invariant variety for $H$. In such a case, we say that the ideal
is {\em $H$-invariant}. The simplest case is the invariant ideal generated by $H$ itself which corresponds to the conservation
of energy.

Let us consider the case where $A$ is the ring of formal power series $\CM[[t, \l,q,p]]$ in $4n$-variables endowed with $t=(t_1,\dots,t_n)$. We use the grading
where the degrees are:
$$\deg(q_i)=\deg(p_i)=1,\ \deg(\l_i)= 2,\ \deg(t_i)=0$$
together with the associated filtration.
We write $f=g+o(k)$ if $f-g$ contains only terms of degree higher than $k$. If $f=o(k)$ we say that it is of {\em order $k$.}

Note that the graduation of a ring $A$ induces a graduation for its derivations: {\em a derivation $v$ is homogeneous of degree  $k$}  if it maps
$Gr^i(A)$ to $Gr^{i+k}(A)$. Similarly a filtration of a ring induces a filtration of its derivations.
\begin{proposition}
\label{P::formal} . Let $\a_1,\dots,\a_n$ be real numbers which are linearly independent over $\QM$.
Let 
$$H_0 =\sum_{i=1}^n (\a_i+t_i) p_iq_i \in \CM[t, \l,q,p]$$ 
and let $I \subset \CM[[t, \l,q,p]]$ be the ideal generated by the
$p_iq_i-\l_i$'s then any function of the type
$$H=H_0+o(2).$$
there exists a sequence $(u_k)$ such that:
\begin{enumerate}[{\rm i)}]
\item $u_k$ is a hamiltonian derivation of order $2^{k-1}$ and degree $2^k$ ;
\item $\p(H)=H_0\ \mod (I^2 \oplus \CM[[t,\l]]).$
\end{enumerate}
\end{proposition}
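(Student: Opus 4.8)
The plan is to build $\varphi$ by a quadratically convergent Kolmogorov--Newton scheme, in which the $k$-th correction $\exp(u_k)$ clears the defect in the orders between $2^{k-1}$ and $2^k$; this doubling is exactly what produces the scaling in i). The first ingredient is the cohomological equation for $L := \{ H_0, - \}$. Split $H_0 = H_0^{\mathrm{s}} + H_0^{\mathrm{n}}$ with $H_0^{\mathrm{s}} = \sum_i \alpha_i p_iq_i$ and $H_0^{\mathrm{n}} = \sum_i t_i p_iq_i$. On a monomial $q^ap^b\l^ct^d$ the semisimple part $\{ H_0^{\mathrm{s}}, - \}$ acts as multiplication by $\langle \alpha, b-a \rangle$, which, since the $\alpha_i$ are linearly independent over $\QM$, vanishes precisely when $a = b$; hence $\{ H_0^{\mathrm{s}}, - \}$ is invertible, with inverse bounded on each homogeneous piece, on the closed subspace spanned by the non-resonant monomials ($a \neq b$) — no Diophantine condition is needed since we work at the formal level. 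Since $\{ H_0^{\mathrm{n}}, - \}$ strictly raises the $t$-degree it is topologically nilpotent, so $L = \{ H_0^{\mathrm{s}}, - \}\bigl(\Id + \{ H_0^{\mathrm{s}}, - \}^{-1}\{ H_0^{\mathrm{n}}, - \}\bigr)$ is invertible on the non-resonant subspace via a convergent Neumann series and $L^{-1}$ preserves orders. Thus any $Q$ supported in degrees $m < \cdot \leq 2m$ with zero resonant part is $L(g)$ for some $g$ supported in the same degrees, and $\{ g, - \}$ is then a Hamiltonian derivation of order $m$ and degree $2m$.

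\emph{The iteration.} Assume inductively that $H_{k-1} - H_0$ has order $\geq 2^{k-1}$ modulo $I^2 \oplus \CM[[t,\l]]$; take a representative $Q$ of this order, truncated to degrees $\leq 2^k$, and split $Q = Q^{\mathrm{nr}} + Q^{\mathrm{res}}$ into non-resonant and resonant parts. Solve $L(g_k) = Q^{\mathrm{nr}}$ with $g_k$ supported in degrees $2^{k-1} < \cdot \leq 2^k$, so that $u_k := \{ g_k, - \}$ is a Hamiltonian derivation of order $2^{k-1}$ and degree $2^k$, and put $H_k := \exp(u_k)(H_{k-1})$. By the Hausdorff formula $H_k = H_{k-1} - L(g_k) + \tfrac{1}{2}\{ g_k, \{ g_k, H_{k-1} \} \} + \cdots = H_0 + Q^{\mathrm{res}} + R$ with $R$ of order $\geq 2^k$, so once $Q^{\mathrm{res}}$ is disposed of the accuracy has doubled. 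And $Q^{\mathrm{res}}$ is negligible modulo $I^2 \oplus \CM[[t,\l]]$: a resonant element is a power series in $t$, in $\l$, and in the products $p_1q_1, \dots, p_nq_n$; writing $p_iq_i = \l_i + f_i$ with $f_1, \dots, f_n$ the generators of $I$ and expanding exhibits it as an element of $\CM[[t,\l]] + I$, and one checks that the particular resonant remainders thrown up by the scheme, starting from $H = H_0 + o(2)$, in fact lie in $I^2 \oplus \CM[[t,\l]]$. Hence $H_k - H_0$ has order $\geq 2^k$ modulo $I^2 \oplus \CM[[t,\l]]$, closing the induction.

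\emph{Conclusion and main difficulty.} Since $u_k$ has order $2^{k-1}$, which grows, the composition $\varphi := \lim_N\, \exp(u_N) \circ \cdots \circ \exp(u_1)$ converges in the filtration topology of $\CM[[t,\l,q,p]]$, and passing to the limit gives $\varphi(H) = H_0$ modulo $I^2 \oplus \CM[[t,\l]]$. The routine part is the Kolmogorov mechanism — the Neumann inversion of $L$ and the inductive doubling. The delicate point, which I expect to be the real obstacle, is the last step of the iteration: controlling the kernel of $L$, i.e.\ showing the accumulated resonant contribution is absorbed by $I^2 \oplus \CM[[t,\l]]$. This is precisely why the conclusion is phrased modulo $I^2 \oplus \CM[[t,\l]]$ rather than as an honest symplectic conjugacy of $H$ to $H_0$ — it is a statement about the pair $(H,I)$ — and one anticipates the rigidity of $I$ (Theorem~1.1), or the cohomology underlying it, entering exactly here.
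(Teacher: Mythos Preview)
Your Kolmogorov--Newton iteration and the inversion of $L=\{H_0,-\}$ on the non-resonant subspace are correct and match the paper's approach. The gap is exactly where you flagged it but then waved through: the claim that the resonant remainder $Q^{\mathrm{res}}$ lies in $I^2\oplus\CM[[t,\l]]$ is false in general, and the rigidity theorem does not rescue it.

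Take $H=H_0+(p_1q_1)^2$. The defect is purely resonant, and writing $p_1q_1=\l_1+f_1$ with $f_1=p_1q_1-\l_1$ gives
\[
(p_1q_1)^2=\l_1^2+2\l_1 f_1+f_1^2\equiv 2\l_1 f_1 \pmod{I^2\oplus\CM[[t,\l]]}.
\]
The term $2\l_1 f_1$ is linear in the generators of $I$, so it lies in $I\setminus I^2$ and is certainly not in $\CM[[t,\l]]$. More generally, any resonant remainder contributes a part $\sum_i A_i(t,\l)\,f_i$ which your derivations $\{g,-\}$ cannot touch, since the image of $g\mapsto\{g,H_0\}$ is exactly the non-resonant subspace.

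The paper's fix is to enlarge the class of derivations: it takes
\[
v=\sum_{i=1}^n a_i\,\d_{t_i}+\{F,-\},\qquad a_i\in\CM[[t,\l]],
\]
which is still a Poisson derivation because the $t_i$ are Casimirs for the central-extension bracket. Then
\[
v(H_0)=\sum_i a_i\,p_iq_i+\{F,H_0\}=\sum_i a_i f_i+\{F,H_0\}\pmod{\CM[[t,\l]]},
\]
so choosing $a_i=A_i$ kills precisely the linear-in-$f$ resonant part, while $\{F,H_0\}$ handles the non-resonant part as in your argument; what is left is honestly in $I^2\oplus\CM[[t,\l]]$. With this one modification your iteration goes through; no appeal to Theorem~1.1 is needed.
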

\begin{proof} Write
$$H_0=H+R_k+o(2^{k+1}),\ R_k \in \CM[[t,\l,q,p]].  $$
where $R_k$ is of order $2^{k}$ and degree at most $2^{k+1}$. We prove the proposition by induction on $k$. 

As the $\a_i$'s are linearly independent, we may find a hamiltonian derivation
$$v=\sum_{i=1}^n a_i \d_{t_i}+\{F,-\}, $$
of order $2^{k-1}$ and degree at most $2^{k}$, and $S \in I^2$ such that:
$$v(H_0)=\{ F,H_0 \} +\sum_{i=1}^n a_i p_iq_i=R+S. $$
As $v$ is Hamiltonian, the automorphism $e^{-v} $ is a Poisson automorphism.
The element $e^{-v}H $ is of the form $H_0+o(2^{k+2})  \mod I^2$.
This proves the proposition.
\end{proof}
The proposition implies that any power series $H  \in \CM[[t, \l,q,p]]$ of the form
$$\sum_{i=1}^n (\a_i+t_i) p_iq_i +o(2)$$
with $ [\QM(\a_1,\dots,\a_n:\QM]=n$ admits an invariant ideal of dimension $n$ over  $\CM[[t, \l]]$.
Our main result is an analytic variant of this proposition. 

\section{Statement of the theorem}
We first define arithmetic classes.

  Denote by $(\cdot,\cdot)$ the Euclidean scalar product in $\CM^n$.  For any vector $\a \in \CM^n $, we define the sequence $\s(\a)$ by:
$$\s(\a)_k :=\min \{ |(\a,i)|: i \in \ZM^n \setminus \{ 0 \}, \| i \| \leq 2^k \} .$$
\begin{definition} The arithmetic class in $\CM^n$ associated to a real decreasing sequence $a=(a_k)$ is the set
$$\Ct(a):=\{\a \in \CM^n: \s(\a)_k  \geq  a_k\} .$$
\end{definition}
 Although the arithmetic class depends on the dimension $n$, we do not specify it in our notation.

  For a given closed subset $X \subset \CM^n$ and a given $l \in \NM \cup \{ \infty \}$, we  denote by $C^l(X)$ the complex valued Whitney $C^l$ functions on $X$~\cite{Whitney_extension}.

Fix a sequence $a=(a_k)$ and consider the associated arithmetic class $ \Ct(a) \subset \CM^n$. The algebra of continuous functions
on $\Ct(a)$ in a neighbourhood of~$\a$ is the direct limit taken over open neighbourhoods of $\a$:
$$C_{\a}^l(a):=\lim_{\to} C^l(\Ct(a) \cap U). $$ Thus as any direct limit of topological vector spaces, it  has a natural topological structure~\cite{Dieudonne_Schwartz,Grothendieck_EVT}. 

If $E,F$ are topological vector spaces, we denote by $\Lt(E,F)$ the space of bounded linear mappings endowed with the strong topology.
\begin{theorem}
\label{T::stable}
Let $a=(a_n)$ be a decreasing positive sequence such that
$$\sum_{n\geq 0} \frac{\log a_n}{2^n}>-\infty. $$  
Take $\a \in \Ct(a) \subset \CM^n$ and consider the morphism of Poisson algebras induced by the inclusion
$\CM\{ \l \} \subset C_{\a}^\infty(a)$:
$$ r:\CM\{t,\l,q,p\} \to \CM\{\l, q,p \} \hat \otimes C_{\a}^\infty(a)  .$$ 
Let  $I \subset \CM\{t,\l,q,p\} $ be the   involutive ideal generated by the
$p_iq_i+\l_i$'s and consider a holomorphic function of the type
$$H=\sum_{i=1}^n (\a_i+t_i) p_iq_i+o(2) \in \CM\{t,\l,q,p\}.$$
There exists a sequence a $1$-bounded morphism $u_\bullet$ such that
\begin{enumerate}[{\rm i)}]
\item  $u_k$ is a polynomial derivation of order $2^{k-1}$ and degree at most $2^k$~;
\item the sequence $\p_k=(e^{u_k}\dots e^{u_0}) $ converges in $\Lt( \CM\{t,\l,q,p\},\CM\{\l, q,p \} \hat \otimes C_{\a}^\infty(a) )$ to a
Poisson morphism ;
\item $\p(H)=\sum_{i=1}^n (\a_i+t_i) p_iq_i\ (\mod(r(I^2) \oplus   \CM\{\l \} \hat \otimes C_{\a}^\infty(a)).$
\end{enumerate}
Moreover if $H$ is real for some antiholomorphic involution then the $u$ can also be chosen real.
\end{theorem}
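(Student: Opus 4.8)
The plan is to imitate the formal Newton-type iteration of Proposition~\ref{P::formal}, but to keep careful analytic control of the constructed derivations so that the infinite composition converges in the space $\Lt(\CM\{t,\l,q,p\},\CM\{\l,q,p\}\hat\otimes C_\a^\infty(a))$. At step $k$ we assume inductively that we have produced a Poisson morphism $\p_{k-1}=e^{u_{k-1}}\cdots e^{u_0}$ such that $\p_{k-1}(H)=H_0+R_k+o(2^{k+1})\pmod{r(I^2)\oplus\CM\{\l\}\hat\otimes C_\a^\infty(a)}$, where $R_k$ has order $2^k$ and degree at most $2^{k+1}$. Writing $R_k=\sum_I g_I(t,\l)\,m_I(q,p)$ as a sum over monomials $m_I$ in the $q_i,p_i$, the homological equation to be solved is $\{F,H_0\}+\sum_i a_i p_iq_i\equiv R_k\pmod{I^2}$. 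Modulo $I$ one has $p_iq_i\equiv\l_i$, and the monomial $q^\mu p^\nu$ is an eigenvector of $\ad_{H_0}=\{-,H_0\}$ with eigenvalue $(\a+t,\mu-\nu)$; the resonant part $\mu=\nu$ is absorbed by the $\sum_i a_i p_iq_i$ term (this is where linear independence of the $\a_i$ over $\QM$, i.e.\ $\a\in\Ct(a)$ with $a_k>0$, is used), and the non-resonant monomials are divided by $(\a+t,\mu-\nu)$. The small-divisor estimate is exactly $|(\a+t,\mu-\nu)|\ge\s(\a)_k - O(2^k)|t|\gtrsim a_k$ for $\|\mu-\nu\|\le 2^k$ and $t$ restricted to a suitably shrinking polydisc, so $F$ and hence $u_k$ is bounded by $C\,a_k^{-1}\|R_k\|$ on a slightly smaller domain.

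The analytic bookkeeping is the heart of the argument. I would fix a decreasing sequence of polydisc radii $\rho_k\downarrow\rho_\infty>0$ with $\rho_k-\rho_{k+1}\sim 2^{-k}$ in the $(q,p)$ directions, and a corresponding sequence of radii in the $\l$ and $t$ directions, together with a fixed relatively open neighbourhood of $\a$ in $\Ct(a)$ over which all the Whitney $C^\infty$ norms are controlled. Using Cauchy estimates to bound the loss incurred by the division by small divisors and by the finitely many differentiations hidden in $\ad_F$, one gets, with $\e_k:=\|R_k\|_{\rho_k}$, an inequality of the schematic form $\e_{k+1}\le C^{2^k}a_k^{-2}\,\e_k^2$ (the quadratic gain is the Newton mechanism: the commutator terms in $e^{-u_k}H-H$ beyond first order, and the tails of $\ad_{u_k}$ applied to the already-normalized part, are all of order $2^{k+1}$ and quadratically small). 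Taking logarithms, $\log\e_{k}\le 2^k(\log\e_0 + \sum_{j<k}2^{-j}(\text{const}+2\log C - 2\log a_j))$, so the hypothesis $\sum_n 2^{-n}\log a_n>-\infty$ is precisely what guarantees $\e_k\to 0$ superexponentially once $\e_0$ is small enough, i.e.\ once we restrict to a small enough polydisc in $t$ (which we may, since $H=H_0+o(2)$ forces $R_1$ to be small on a small domain). The same estimates give $\|u_k\|\le C^{2^k}a_k^{-1}\e_k\to 0$ fast, so $\prod_k e^{u_k}$ converges in operator norm on $\Ot(\text{polydisc }\rho_\infty)$, and composing with the restriction $r$ to $\Ct(a)$ gives a bounded operator into $\CM\{\l,q,p\}\hat\otimes C_\a^\infty(a)$; being a limit of Poisson morphisms, $\p$ is a Poisson morphism, which is assertion~(ii), and the limit of the congruences is assertion~(iii). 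Assertion~(i) is immediate from the construction since each $u_k$ is built from a polynomial $F$ of the stated order and degree plus the $\sum_i a_i\d_{t_i}$ term.

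The main obstacle I anticipate is not the algebra of the homological equation but making the convergence estimate uniform in the Whitney parameter. One must check that solving $(\a+t,\mu-\nu)\cdot F_{\mu\nu}=(R_k)_{\mu\nu}$ and then forming $a_i(t)$ produces functions that are not merely holomorphic in $(t,\l,q,p)$ but depend on $\a$ as genuine Whitney $C^\infty$ functions on $\Ct(a)$, with all Whitney derivatives controlled by the same $a_k^{-1}$-type bounds (up to worse constants that still fit the convergence scheme). Differentiating the relation $(\a+t,\mu-\nu)F_{\mu\nu}=(R_k)_{\mu\nu}$ in $\a$ along $\Ct(a)$ brings down extra factors of $a_k^{-1}$ per derivative, so one needs the standard trick of working with the chain of seminorms $\|\cdot\|_{\rho_k,m}$ ($m$ = Whitney order) and exploiting the super-geometric decay of $\e_k$ to kill any fixed polynomial-in-$k$ growth of these constants; this is routine in KAM-with-parameters but requires care. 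The final remark on real forms is handled by observing that the whole construction is equivariant under any antiholomorphic involution fixing $H_0$: if $H$ is real, the small divisors $(\a+t,\mu-\nu)$ are real, the solution $F$ of the homological equation can be chosen real, hence each $u_k$ is real and so is the limit $\p$.
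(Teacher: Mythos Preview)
Your proposal is essentially a classical direct KAM iteration, and the scheme you outline (solve the homological equation by diagonalising $\ad_{H_0}$ on monomials, control small divisors by $a_k$, get the Newton recursion $\e_{k+1}\lesssim C^{2^k}a_k^{-2}\e_k^2$, and read off the Bruno-type condition $\sum 2^{-n}\log a_n>-\infty$) is correct in outline and would yield the result with enough care. The paper, however, takes a different route: it does \emph{not} redo the iteration but instead invokes an abstract KAM theorem (Theorem~\ref{T::KAM}, from~\cite{Abstract_KAM}) as a black box, and the entire proof consists in verifying its hypotheses. Concretely, the paper sets up an Arnold space $E_\bullet=C_\a^\omega(a)_\bullet\hat\otimes C^\omega_{3n}$, identifies $F_\bullet=E_\bullet^{(3)}\cap(I^2\oplus R)$, and constructs an explicit complement $G_\bullet=A_\bullet\oplus B_\bullet\oplus C_\bullet$ (with $A_\bullet=\bigoplus R(p_iq_i-\l_i)$, $B_\bullet$ the non-resonant monomials, $C_\bullet=\bigoplus(p_iq_i-\l_i)B_\bullet$); the $1$-boundedness of the associated projection comes not from monomial manipulations but from the Hilbert-space structure of $L^{2,\omega}_n$ (Proposition~\ref{P::foncteur}, Corollary~\ref{C::projection}). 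The quasi-inverse $j_\bullet$ to the infinitesimal action is then written as an explicit $3\times3$ lower-triangular matrix whose non-trivial entries are Hadamard products with $h_k=\sum_{\|i-j\|\le 2^k}(\a,i-j)^{-1}q^ip^j$, and Lemma~\ref{L::Hadamard} gives the tameness with norm $a_k^{-1}$; conditions A) and B) of the abstract theorem follow, and the iteration, convergence, and Bruno condition are all absorbed into that theorem. The $C^\infty$-in-$\a$ regularity, which you correctly flag as the delicate point and propose to handle by differentiating the homological equation, is in the paper deferred to an Appendix: one proves the $C^0$ statement first and then upgrades via Cauchy inequalities between the Arnold spaces $C^{0,\omega}_\a(a)$ and $C^{l,\omega}_\a(a)$ (Lemma~\ref{L::Cauchy}, Corollary~\ref{C::Cauchy}), exploiting that the $u_n$ decay faster than any power of $2^{-n}a_n$. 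Your approach buys self-containment and makes the appearance of the arithmetic condition completely transparent; the paper's approach buys modularity and avoids redoing standard KAM analysis, at the cost of importing a fairly heavy abstract framework.
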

The theorem implies that, in a neighbourhood of the origin,
the hamiltonian flow of $H$ admits  a family of invariant Lagrangian manifolds parametrised by a neighbourhood of $(0,\a) \in \CM^n \times \Ct(a)$.
Regularity results show that it is sufficient to prove the theorem for the $C^0$ case~(see Appendix).

As we shall see the theorem is a consequence of the abstract KAM theorem:

\begin{theorem}[\cite{Abstract_KAM}]
 \label{T::KAM} Let $E_\bullet$ be an Arnold space, $M_\bullet $ a closed subspace of $E_\bullet$ and
 $F_\bullet,G_\bullet$ a bounded splitting of $M_\bullet $. Consider a vector subspace $\alg_\bullet \subset \Mt^1(E_\bullet)^{(2)}$ and let $a \in E_0$ be such that $\alg_\bullet$ maps $a+F_\bullet$ in $G_\bullet$.  We consider the linear maps
 $$\rho(\a): \alg_\bullet \to G_\bullet,  u \mapsto u (a+\a) $$
 with $\a \in F_\bullet$.
Assume that for some $k \geq 0$, the following assumptions are satisfied:
 \begin{enumerate}[{\rm A)}]
\item for each $\a \in F_\bullet$, there is a tamed right quasi-inverse $j (\a) \in \Mt^k( G_\bullet,\alg_\bullet)$ of $\rho(\a)$ ;
\item the map   $j: F_\bullet \to \Mt^k( G_\bullet,\alg_\bullet),\ \a \mapsto j(\a)$ is tamed by  a sequence $(p_n)$.
\end{enumerate}
For any $b_0 \in M_0 $, we define the sequences 
$$ \left\{ \begin{matrix}
 \b_{n+1}&=&e^{-u_n} (a_n+\b_n)-a_{n+1} ;\\
  u_{n+1}&=&j(\sum_{i=0}^n\a_i)\left(\pi_G( \b_{n+1})\right) \end{matrix} \right.$$ 
 where $u_0=j(0)(\b_0) $ and
$$\left\{ \begin{matrix}   \a_{n}&=&\pi_F(u_n(a_n)-\b_n) ; \\
\g_n&=&\pi_G(u_n(a_n)-\b_n)\\  a_{n+1}&=& a_n+\a_n \end{matrix} .\right. $$ 
If $\b_n \in \Ht_d(E_n)$, then there exists a constant $B>0$   such that
 \begin{enumerate}[{\rm i)}]
 \item   $N^1_s(u_n)< B p_n^{-1} $ for any $s$ sufficiently small~;
 \item $  g(a+b)=r(a) (\mod F_\infty) $ where   $g$ is the limit of the sequence $(e^{u_n} e^{u_{n-1}} \dots  e^{u_1}  e^{u_0})_{n \in \NM} \subset \Lt(E_0,E_\infty)$.  
 \end{enumerate}
 \end{theorem}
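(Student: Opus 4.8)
The plan is to recognise the displayed recursion as a quadratically convergent Newton (Kolmogorov--Arnold--Moser) scheme in the scale $E_\bullet$ and to prove that it is well defined and convergent under hypotheses A) and B). The algebraic input is that $j(\a)$ is a right quasi-inverse of the linearised map $\rho(\a)$: since $\alg_\bullet$ maps $a+F_\bullet$ into $G_\bullet$, the correction $\rho(\cdot)u$ always lies in $G_\bullet$, so the natural \emph{homological equation} to solve at step $n$ is $\rho(\sum_{i<n}\a_i)\,u_n=\pi_G(\b_n)$, and $u_n=j(\sum_{i<n}\a_i)(\pi_G(\b_n))$ solves it up to the controlled quasi-inverse defect. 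First I would set up the induction: starting from $u_0=j(0)(\b_0)$ and $b_0\in M_0$, I would check that each of $\a_n,\g_n,a_{n+1},\b_{n+1},u_{n+1}$ is well defined in the appropriate member of the scale, using the hypothesis $\b_n\in\Ht_d(E_n)$ to guarantee that the iterates stay in the admissible regularity class on which $j$ and the exponentials $e^{\pm u_n}$ act.

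The heart of the argument is the quadratic estimate. Because $u_n$ removes the first-order $G$-component of the error, the new error $\b_{n+1}=e^{-u_n}(a_n+\b_n)-a_{n+1}$ is, up to the quasi-inverse defect, the Taylor remainder of $\a\mapsto e^{-u_n}(a_n+\a)$ beyond first order evaluated at $\b_n$, together with the $F$-correction $a_{n+1}-a_n=\a_n$. Since $u_n\in\alg_\bullet\subset\Mt^1(E_\bullet)^{(2)}$ is an order-one operator of the quadratic type $(2)$, this remainder is bilinear in $(u_n,\b_n)$ and hence, using $u_n\sim j(\cdot)(\pi_G(\b_n))$, quadratic in $\b_n$. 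This is precisely the mechanism that doubles the order at each step --- the bookkeeping ``$u_k$ of order $2^{k-1}$ and degree $2^k$'' --- and yields, schematically, an estimate of the form $N(\b_{n+1})\lesssim p_n\,N(\b_n)^2$ across the scale, where the factor $p_n$ records the loss built into the taming of $j$ by hypothesis B).

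With the quadratic estimate in hand I would close the induction to obtain conclusion i), $N^1_s(u_n)<B\,p_n^{-1}$ for $s$ small: from $u_{n+1}=j(\sum_{i\le n}\a_i)(\pi_G(\b_{n+1}))$ and the taming of $j$, the bound on $u_{n+1}$ is controlled by $p_{n+1}$ times $N(\b_{n+1})$, and the quadratic gain makes the geometric-type induction close with a uniform constant $B$. Conclusion ii) then follows: once $\sum_n N^1_s(u_n)$ is summable, the partial products $\p_n=e^{u_n}\cdots e^{u_0}$ form a Cauchy sequence in $\Lt(E_0,E_\infty)$ and converge to a bounded operator $g$. Passing to the limit in the recursion, $\b_n\to 0$ in $G_\infty$ while the accumulated $F$-shifts $\sum_i\a_i$ converge, so $g(a+b)$ and $r(a)$ agree modulo $F_\infty$.

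The main obstacle is, as always in Nash--Moser/KAM theory, the tame balancing in the quadratic estimate: the quasi-inverse $j$ lies in $\Mt^k(G_\bullet,\alg_\bullet)$ and therefore loses regularity, and one must verify that the squaring of the error survives the loss encoded by $(p_n)$. Making ``$N(\b_{n+1})\lesssim p_n\,N(\b_n)^2$'' precise requires the interpolation (convexity) inequalities and the smoothing intrinsic to an Arnold space, in order to convert high-norm bounds on $u_n$ into low-norm bounds on the remainder without destroying the quadratic factor; this is the step where the structure of $E_\bullet$ is genuinely used, and where the constant $B$ and the admissible range of $s$ are pinned down.
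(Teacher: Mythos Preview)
The paper does not contain a proof of this theorem: it is quoted from the reference \cite{Abstract_KAM} and used as a black box. Immediately after stating it the author writes ``Our task consists in showing that this theorem applies in our setting,'' and the remainder of the paper is devoted to constructing the spaces $E_\bullet$, $F_\bullet$, $G_\bullet$ and the quasi-inverse $j$ so that hypotheses A) and B) are satisfied. There is therefore no ``paper's own proof'' to compare your proposal against.

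Your sketch is a reasonable outline of the standard quadratic Newton/KAM mechanism, and is presumably in the spirit of what is done in \cite{Abstract_KAM}, but verifying it would require consulting that reference rather than the present paper.
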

  
Our task consists in showing that this theorem applies in our setting.  We assume the reader is acquainted with the notions
introduced in it (see~\cite{Abstract_KAM} for details). We first define the functional spaces to which the theorem applies.
\section{The spaces $C^\omega_n$, $L^{p,\omega}_n$, $C_\a^\omega(a)_\bullet$.}
 \subsection*{Definition of $C^\omega_n$}
  For any open subset $U \subset \CM^n$, the space of holomorphic functions in $U$ is endowed with the topology of uniform convergence on compact subset of $U$. If the open subset $U$ contains the origin, we get a restriction mapping
 $$r:\Ot_{\CM^n}(U) \to \Ot_{\CM^n,0}. $$
 Such mappings induce a direct limit topology on  $\Ot_{\CM^n,0}$~(see~\cite{Grothendieck_EVT} for details).
 
 We now construct a smaller directed system of vector spaces having $\Ot_{\CM^n,0}$ as limit. We denote by $(C^\omega_n)_s$,
  $s \in ]0,1/\sqrt{\pi}[$,  the vector space of continuous functions in the polydisc
  $$D_s:= \{ z \in \CM^n: \sup_{i=1,\dots,n }| z_i | \leq s \}$$
 which are holomorphic in its interior~
 $$(C^\omega_n)_s= \Ot(\mathring{D}_s) \cap C^0(D_s,\CM).$$
 The choice of $1/\sqrt{\pi}$ will simplify some estimates in the sequel.
 
  The $C^0$-norm
  $$| f |_s:=\sup_{z \in D_s} |f(z)| $$
endows  $(C^\omega_n)_s$ of a Banach space structure. The inclusion $D_s \subset D_t, t>s$ induces a directed system 
$$(C^\omega_n)_t \to (C^\omega_n)_s$$
which forms a Kolmogorov space. This directed system is of course standard~\cite{Douady_these,Nagumo}. On can define in a similar way the Kolmogorov spaces  $C^{k,\omega}$ by replacing continuous functions by $k$-differentiable ones and taking the $C^k$-norm.
 \subsection*{Definition of $L^{p,\omega}_n$}
We consider the Kolmogorov spaces $L^{p,\omega}_n$ :
$$(L^{p,\omega}_n)_s:= \G(\intr(D_s),\Ot_{\CM^n}) \cap L^p(D_s,\CM)$$
with $s \in ]0,1/\sqrt{\pi}[$.

For $p=2$, each of these spaces has a Hilbert space structure defined by the hermitian form
$$(f,g) \mapsto   \int_{D_s} f(z) \bar g(z) dV , $$
where $dV$ is the Euclidean volume on $\CM^n \approx \RM^{2n}$. In this Kolmogorov space, the  projection on a closed vector subspace is $0$-bounded. We now wish to extend this property to the Kolmogorov space $C^\omega_n$.

As any continuous function on a compact set is integrable, we get a canonical mapping~: 
$$I:C^\omega_n \to L^{2,\omega}_n,\ f \mapsto f .$$
This mapping can be distinguished from the identity only because the  source and target spaces are different Kolmogorov spaces. Such morphism
will be called {\em Identity morphism}.

As
$$\left( \int_{D_s} | f(z)|^2 dV \right)^{1/2} \leq \sup_{z \in D_s} \left| f(z) \right| \left(\sqrt{\pi}s\right)^{n}, $$
this identity morphism $I$ is $0$-bounded and
$$N_\tau^0(I)=\left(\sqrt{\pi}\tau\right)^{n}.  $$
In particular as $\tau \leq 1/\sqrt{\pi}$, we have $N_\tau^0(I)<1$ (hence the above choice of the interval in the definition of $C_n^\omega$).

Conversely, take a function $f \in (L^{2,\omega}_n)_t$ for any $s<t$ we have $f \in (C^\omega_n)_s$. Thus we also get another
identity morphism
$$I^{-1}:L^{2,\omega}_n \to  C^\omega_n$$
\begin{proposition}
\label{P::foncteur} The identity  morphism $I^{-1}$ is $1$-bounded of norm at most equal to one.
\end{proposition}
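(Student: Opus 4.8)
The plan is to quantify, for each pair $s<t$ in $]0,1/\sqrt\pi[$, the bound
$$|f|_s \leq c(s,t)\,\Bigl(\int_{D_t}|f(z)|^2\,dV\Bigr)^{1/2}$$
valid for every $f\in (L^{2,\omega}_n)_t$, and then to read off from the constant $c(s,t)$ that the identity morphism $I^{-1}$ is $1$-bounded with norm $\leq 1$ in the sense of Kolmogorov/Arnold spaces used in \cite{Abstract_KAM}. First I would fix a point $z\in D_s$ and apply the mean-value property for holomorphic functions on the polydisc $z+D_{t-s}\subset D_t$: since $f$ is holomorphic on the interior of $D_t$, its value $f(z)$ equals the average of $f$ over that polydisc, and by Cauchy--Schwarz
$$|f(z)| \;\leq\; \frac{1}{\Vol(D_{t-s})}\int_{z+D_{t-s}}|f|\,dV \;\leq\; \frac{1}{\Vol(D_{t-s})^{1/2}}\Bigl(\int_{D_t}|f|^2\,dV\Bigr)^{1/2}.$$
Taking the supremum over $z\in D_s$ gives $|f|_s\leq \Vol(D_{t-s})^{-1/2}\,\|f\|_{L^2(D_t)}$, and since a polydisc of polyradius $t-s$ in $\CM^n\cong\RM^{2n}$ has volume $(\pi(t-s)^2)^n=(\sqrt\pi(t-s))^{2n}$, the sharp constant is $c(s,t)=(\sqrt\pi(t-s))^{-n}$.

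Next I would match this with the bookkeeping of boundedness for morphisms of Kolmogorov spaces. A morphism is $1$-bounded when the norm of the component $(L^{2,\omega}_n)_t\to(C^\omega_n)_s$ is controlled by a quantity behaving like $(t-s)^{-1}$ to the first power in the loss of radius — more precisely, in the normalization of \cite{Abstract_KAM} one sets $N^1_\tau(I^{-1})=\sup_{t-s=\tau}(\text{operator norm})\cdot\tau$, or the analogous scale-invariant quantity. With the estimate above, the operator norm of $(L^{2,\omega}_n)_t\to(C^\omega_n)_s$ is exactly $(\sqrt\pi(t-s))^{-n}$, so multiplying by the appropriate power of the radius loss and using $t-s\leq 1/\sqrt\pi$ (so that $\sqrt\pi(t-s)\leq 1$) yields a bound $\leq 1$. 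The choice of the interval $]0,1/\sqrt\pi[$ made in the definition of $C^\omega_n$ is precisely what makes the constant land below $1$, mirroring the computation $N^0_\tau(I)=(\sqrt\pi\tau)^n<1$ already done for the forward morphism $I$.

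The only genuinely delicate point is bookkeeping rather than analysis: one must check that the power of $(t-s)$ appearing — namely $n$ in $(\sqrt\pi(t-s))^{-n}$ — is absorbed correctly into the definition of ``$1$-bounded,'' i.e.\ that the relevant notion tolerates a fixed finite power $n$ of the radius loss (this is standard for tame/Kolmogorov estimates, where any fixed polynomial loss is allowed and only the \emph{order} of the morphism, here $1$, and the size of the \emph{constant}, here $\leq 1$, matter). I would also note that the mean-value/Cauchy--Schwarz argument uses only holomorphy on the interior and $L^2$-integrability on $D_t$, so it applies verbatim to every $f\in(L^{2,\omega}_n)_t$, and that the estimate is uniform in $n$ in the sense required. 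The real-analytic and $C^{k,\omega}$ variants, should they be needed, follow by differentiating under the averaging integral, but for the stated proposition the $C^0$-to-$C^0$ estimate above suffices.
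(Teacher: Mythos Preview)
Your analytic argument is essentially the paper's: both extract the value $f(w)$ at a point of $D_s$ from the $L^2$-norm of $f$ on the polydisc of radius $\sigma=t-s$ centred at $w$ --- you via the mean-value property and Cauchy--Schwarz, the paper via the Taylor expansion at $w$ and orthogonality of monomials in $L^2$, which is the same computation. Your constant $(\sqrt\pi(t-s))^{-n}$ is in fact the honest one for general $n$; the paper records the bound simply as $\sigma^{-1}$, which is the $n=1$ outcome (a factor $\sqrt\pi$ is also dropped), and that is exactly what makes the wording ``$1$-bounded of norm at most one'' literal.

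The place where your proposal does not close is precisely the one you flag yourself. You try to absorb the exponent $n$ into the definition of ``$1$-bounded'' by asserting that the notion tolerates any fixed polynomial loss in $t-s$. That reading is inconsistent with the paper's own usage: in Proposition~\ref{L::Ck} a loss of $(t-s)^{-n}$ is explicitly called ``$n$-bounded with norm at most equal to $1$,'' not $1$-bounded. So in the paper's terminology your estimate shows that $I^{-1}$ is $n$-bounded, not $1$-bounded; the paper's proof, by writing the one-variable computation, obtains the exponent $1$ on the nose. For the downstream applications (Corollary~\ref{C::projection} and the bounded splitting in the main theorem) only some fixed finite order is required, so nothing is endangered by this discrepancy --- but your argument as written does not establish the proposition exactly as stated, and the hedge about ``tame estimates tolerating any fixed power'' should be removed.
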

\begin{proof}
We denote by $|\cdot |_t$ the norm in the Hilbert space $(L^{2,\omega}_n)_t$.
Take   $f \in (L^{2,\omega}_n)_t$, the Taylor expansion at $w \in D_s$ with $s<t$ gives
$$f(z)=\sum_{j \geq 0} a_j (z-w)^j,\ a_j \in \CM. $$
Now let $\G_w$ be the polydisk centred at $w$ with radius $\s=|t-s|$. We have
$$\int_{\G_w} | f(z)|^2 dV=\sum_{j \geq 0} |a_j|^2 \s^{2j+2}  $$
and
$$\int_{\G_w} | f(z)|^2 dV \leq \int_{D_t} | f(z)|^2 dV=| f |_t^2  $$
This shows that
$$ |f(w) |=|a_0| \leq  \s^{-1}\left( \int_{\G_w} | f(z)|^2 dV \right)^{1/2} \leq  \s^{-1} | f |_{s+\s}$$
for any $w \in D_s$ and proves the proposition.
\end{proof}
\begin{corollary}
\label{C::projection} Let $F$ be a closed subspace of a Kolmogorov space $E$ compatible with the maps of the directed system:
$$\xymatrix{F_t \ar[r] \ar[d] & F_s  \ar[d]\\
E_t  \ar[r]& E_s
}$$
for any $t>s$.  There exists a one-bounded projection   $C^\omega_n \to F$.
\end{corollary}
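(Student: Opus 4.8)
The plan is to transfer the statement from the Hilbert Kolmogorov space $L^{2,\omega}_n$, in which orthogonal projections onto closed subspaces are $0$-bounded, to $C^\omega_n$, using the two identity morphisms $I\colon C^\omega_n\to L^{2,\omega}_n$ and $I^{-1}\colon L^{2,\omega}_n\to C^\omega_n$ constructed above. Denote by $r_{ts}$ the structural maps of the directed systems for $s<t$. To the closed subspace $F\subset C^\omega_n$ I would associate the subspace $M\subset L^{2,\omega}_n$ defined levelwise by $M_s:=\overline{I(F_s)}$, the closure of the image of $F_s$ inside the Hilbert space $(L^{2,\omega}_n)_s$. Since $I$ commutes with the restriction maps, one has $r_{ts}(I(F_t))=I(r_{ts}(F_t))\subset I(F_s)\subset M_s$, and as $M_s$ is closed this yields $r_{ts}(M_t)\subset M_s$; hence $M$ is a closed subspace of the Kolmogorov space $L^{2,\omega}_n$, compatible with its directed system in the sense required by the corollary.

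Next, invoke the property of $L^{2,\omega}_n$ noted above: there is a $0$-bounded projection $P\colon L^{2,\omega}_n\to M$, which being orthogonal has norm at most one. Set $Q:=I^{-1}\circ P\circ I$. The point that must be checked is that $Q$ takes values in $F$ and not merely in a larger space. If $g\in M_t$, write $g=\lim_k I(f_k)$ in $(L^{2,\omega}_n)_t$ with $f_k\in F_t$; for $s<t$ the morphism $I^{-1}$ is bounded from $(L^{2,\omega}_n)_t$ to $(C^\omega_n)_s$ by Proposition~\ref{P::foncteur}, so $I^{-1}(g)=\lim_k r_{ts}(f_k)$ in $(C^\omega_n)_s$, and since $r_{ts}(f_k)\in F_s$ and $F_s$ is closed, $I^{-1}(g)\in F_s$. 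Thus $Q$ is a morphism $C^\omega_n\to F$. That it is a projection is then immediate: for $f\in F$ we have $I(f)\in M$, hence $P(I(f))=I(f)$, hence $Q(f)=I^{-1}(I(f))=f$, i.e. $Q$ composed with the inclusion of $F$ is the identity morphism of $F$.

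Finally, the norm estimate is a matter of composing bounds. Since $I$ is $0$-bounded with $N^0_\tau(I)=(\sqrt{\pi}\,\tau)^n\le 1$, $P$ is $0$-bounded of norm at most one, and $I^{-1}$ is $1$-bounded of norm at most one, the composite $Q=I^{-1}\circ P\circ I$ is $1$-bounded with norm bounded by the product of the three norms, hence at most one. I expect the only genuinely delicate step to be the verification that $M$ is an admissible closed subspace and that $Q$ lands inside $F$ — essentially the continuity-and-closedness argument above — while the degree bookkeeping and the projection identity are formal. I would also remark that the construction is manifestly $\CM$-linear and commutes with any antiholomorphic involution, so no extra work is needed if one wants a projection respecting a real structure.
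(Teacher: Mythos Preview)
Your proof is correct and follows essentially the same route as the paper: transfer the orthogonal projection from the Hilbert Kolmogorov space $L^{2,\omega}_n$ to $C^\omega_n$ via the identity morphisms $I$ and $I^{-1}$, composing their $0$- and $1$-boundedness to obtain a $1$-bounded projection. The paper's argument is the terse diagrammatic version of what you wrote; you have simply made explicit the verifications (that the closure $M$ is compatible with the directed system, that $I^{-1}$ carries $M_t$ into $F_s$ by closedness of $F_s$, and that the composite restricts to the identity on $F$) which the paper leaves implicit.
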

\begin{proof} Denote by $\widetilde F$ the completion of $F$ in $L^{\omega,2}_n$ and consider the orthogonal projection
$$\pi:L^{\omega,2}_n \to \widetilde{F} .$$
For any $s<t$, we define a one-bounded projection from $C^\omega_n$ to $F$ using the commutative diagram:
$$\xymatrix{(C^\omega_n)_t \ar[r] \ar[d]_I & F_s  \\
(L^{\omega,2}_n)_t  \ar[r]^-\pi& \widetilde{F}_t \ar[u]_{I^{-1}_{|\widetilde{F}_t}}
}$$
\end{proof}
 \subsection*{Definition of $L^{\infty,\omega}_n$}
Let $(L^{\infty,\omega}_n)_s $ be the space of convergent power   series in $z_1,\dots,z_n$
such that the quantity
$$| f |_s^\infty=\sup_{i} | a_i |s^{|i|}, |i|:=i_1+i_2+\dots+i_n$$ 
is finite. This defines a Kolmogorov space that we denote by $L^{\infty,\omega}_n$. Recall that a series is called of {\em order $N$} if, in its Taylor expansion, all terms of degree less than $N$ vanish.

\begin{lemma}
\label{L::UV}
 If $f \in L^{\infty,\omega}_n $ is of order $N$ then
$$  | f |^\infty_s  \leq | f |^\infty_{s+\s} \left(\frac{s}{s+\s}\right)^{N} $$
\end{lemma}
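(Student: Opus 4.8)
The plan is to argue termwise on the Taylor coefficients. Write $f=\sum_i a_i z^i$; by definition $|f|^\infty_s=\sup_i |a_i|\,s^{|i|}$, and since $f$ is of order $N$ the coefficient $a_i$ vanishes whenever $|i|<N$. Hence the supremum defining $|f|^\infty_s$ effectively ranges only over the multi-indices $i$ with $|i|\geq N$, and it suffices to bound $|a_i|\,s^{|i|}$ for each such $i$.

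For a fixed $i$ with $|i|\geq N$ I would simply factor $s^{|i|}=(s+\s)^{|i|}\left(\frac{s}{s+\s}\right)^{|i|}$. Since $0<\frac{s}{s+\s}<1$ and $|i|\geq N$, the map $m\mapsto\left(\frac{s}{s+\s}\right)^{m}$ is decreasing in $m$, so $\left(\frac{s}{s+\s}\right)^{|i|}\leq\left(\frac{s}{s+\s}\right)^{N}$. Therefore
$$|a_i|\,s^{|i|}\leq |a_i|\,(s+\s)^{|i|}\left(\frac{s}{s+\s}\right)^{N}\leq |f|^\infty_{s+\s}\left(\frac{s}{s+\s}\right)^{N},$$
where the last step uses the definition of $|f|^\infty_{s+\s}$ as a supremum. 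Taking the supremum over all admissible $i$ gives the claimed inequality.

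There is essentially no serious obstacle here: the whole content is the monotonicity estimate $\left(\frac{s}{s+\s}\right)^{|i|}\leq\left(\frac{s}{s+\s}\right)^{N}$, and the only point worth flagging is that this is exactly where the order hypothesis $|i|\geq N$ enters — for low-degree monomials the inequality would go the wrong way. One also notes the degenerate cases $f=0$ and $\s=0$, in which the statement holds trivially.
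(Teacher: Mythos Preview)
Your argument is correct and is essentially identical to the paper's own proof: the paper also writes $f=\sum_{|i|\geq N}a_i z^i$, factors $|a_i|s^{|i|}=|a_i|(s+\s)^{|i|}\left(\frac{s}{s+\s}\right)^{|i|}$, and bounds this by $|f|^\infty_{s+\s}\left(\frac{s}{s+\s}\right)^N$. You have simply made the monotonicity step $\left(\frac{s}{s+\s}\right)^{|i|}\leq\left(\frac{s}{s+\s}\right)^N$ more explicit than the paper does.
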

\begin{proof}
Put $ f=\sum_{|i| \geq N } a_i z^i$, we have
$$ | a_i| s^{|i|} = | a_i| (s+\s)^{|i|}  \left(\frac{s}{s+\s}\right)^{|i|}  \leq | f |^\infty_{s+\s} \left(\frac{s}{s+\s}\right)^{N}$$
\end{proof}
This lemma relates the harmonic filtration to the usual filtration by formal power series. We now wish to have a similar result for $C^\omega_n$

For this, we consider the identity morphism 
$$I:C^\omega_n \to L^{\infty,\omega}_n.$$
This identity morphism factors through the identity morphism
$$ C^\omega_n \to L^{2,\omega}_n.$$
 It is therefore $0$-bounded of norm at most  equal to $1$.

\begin{proposition}
\label{L::Ck} The identity morphism $$I^{-1}: L^{\infty,\omega}_n \to C^\omega_n $$
is $n$-bounded with norm at most equal to $1$.  
\end{proposition}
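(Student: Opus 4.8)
The plan is to bound the sup-norm $|f|_s$ of a holomorphic function on the polydisc $D_s$ by its $L^{\infty,\omega}$-norm $|f|^\infty_{s+\s}$ on a slightly larger polydisc, losing a factor that is polynomial of degree $n$ in $\s^{-1}$ — this is exactly what $n$-boundedness demands. First I would write the Taylor expansion $f=\sum_i a_i z^i$ on $D_{s+\s}$, so that on $D_s$ one has the crude estimate $|f(z)| \leq \sum_i |a_i| s^{|i|}$. The point is then to compare the sum $\sum_i |a_i| s^{|i|}$ with the supremum $\sup_i |a_i|(s+\s)^{|i|} = |f|^\infty_{s+\s}$. Bounding each term, $|a_i| s^{|i|} \leq |f|^\infty_{s+\s} \bigl( s/(s+\s)\bigr)^{|i|}$, so summing over all multi-indices gives
$$|f|_s \;\leq\; |f|^\infty_{s+\s}\sum_{i \in \NM^n}\left(\frac{s}{s+\s}\right)^{|i|} \;=\; |f|^\infty_{s+\s}\left(\sum_{m\geq 0}\left(\frac{s}{s+\s}\right)^{m}\right)^{n} \;=\; |f|^\infty_{s+\s}\left(\frac{s+\s}{\s}\right)^{n}.$$
Since $(s+\s)/\s \leq (s+\s)/\s$ and, with $s \leq 1/\sqrt\pi < 1$, one can absorb the $s$-dependence into a bound of the shape $C\,\s^{-n}$, this already exhibits the morphism as $n$-bounded; tracking constants carefully (using $s+\s \leq$ a fixed bound coming from the interval $]0,1/\sqrt\pi[$ in which the radii live, together with the convention built into the earlier normalisations) should bring the norm down to at most $1$, as claimed.

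The only genuine subtlety — and the step I expect to be the main obstacle — is matching the combinatorial factor $\bigl((s+\s)/\s\bigr)^n$ to the precise definition of the filtration-degree norm $N^n_\tau$ used in the paper's notion of ``$n$-bounded morphism of Kolmogorov spaces'': one must check that the geometric-series bound above is of exactly the admissible form, i.e. that after the change of variables relating $(s,\s)$ to the parameters $(\tau,\s)$ in the definition of $N^n$, the estimate reads $N^n_\tau(I^{-1}) \leq 1$ rather than $\leq C$ for some constant $C>1$. This is where the choice of the interval $]0,1/\sqrt\pi[$ and the earlier bookkeeping for $I:C^\omega_n\to L^{2,\omega}_n$ and $I^{-1}:L^{2,\omega}_n\to C^\omega_n$ pay off, so I would phrase the final comparison to reuse those normalisations verbatim. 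Everything else is the elementary geometric-sum computation sketched above, parallel in spirit to the proof of Lemma \ref{L::UV} and of Proposition \ref{P::foncteur}.
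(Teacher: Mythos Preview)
Your argument is correct and is essentially the paper's own proof: write $t=s+\s$, bound $|f(z)|\leq\sum_i|a_i|s^{|i|}=\sum_i|a_i|t^{|i|}(s/t)^{|i|}\leq |f|_t^\infty\,(t/(t-s))^n$ via the geometric series, exactly as you do. The final step you flag as a ``genuine subtlety'' is in fact immediate: since the radii lie in $]0,1/\sqrt{\pi}[$ one has $t<1$, hence $(t/(t-s))^n\leq 1/(t-s)^n$, which is precisely the shape $N^n_\tau(I^{-1})\leq 1$ required---no further bookkeeping or reuse of the $L^{2,\omega}$ normalisations is needed.
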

\begin{proof}
We denote by $| \cdot |_t$ the norm in $(L^{\infty,\omega}_n)_t$.
Take $f \in  (C^\omega_n)_t$ and chose  $z \in D_s$ with $t>s$.
We have:
$$| f(z)| \leq \sum_{i} | a_i| s^{|i|} = \sum_{i} | a_i| t^{|i|} \left(\frac{s}{t}\right)^{|i|}  .$$
Using the estimate
$$  | a_i| t^{|i|} \leq | f |_t, $$
we get that
$$ |f(z) |\leq \left( \frac{t}{t-s} \right)^n | f |_t \leq  \frac{1}{(t-s)^n} | f |_t .$$
  \end{proof}
  Using Lemma~\ref{L::UV}, we get the
 \begin{corollary} 
 \label{C::foncteur}
 For any $f \in (C^\omega_n)_t^{(2^N)}$ and $s < t \leq 1$ we have:
  $$  | f |_s  \leq \frac{1}{(t-s)^n}| f |_t \left(\frac{s}{t}\right)^{2^N}. $$
  \end{corollary}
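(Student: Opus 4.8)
The plan is to factor the estimate through the Kolmogorov space $L^{\infty,\omega}_n$: there Lemma~\ref{L::UV} already records, for a power series of order $2^N$, the double‑geometric gain $(s/t)^{2^N}$ obtained on passing from radius $t$ to radius $s$, and one only has to transport this back to $C^\omega_n$. The outward trip — the identity morphism $C^\omega_n\to L^{\infty,\omega}_n$ — is $0$-bounded of norm $\le 1$ and hence costs nothing; the return trip is Proposition~\ref{L::Ck}, which is $n$-bounded and so contributes a factor of the type $(t-s)^{-n}$.

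Concretely, since the identity morphisms leave Taylor coefficients unchanged, the image of $f\in(C^\omega_n)_t^{(2^N)}$ in $L^{\infty,\omega}_n$ is again of order $2^N$, and $0$-boundedness gives $|f|^\infty_\rho\le |f|_\rho$ at every admissible radius $\rho$. Fixing $r$ with $s<r<t\le 1$ and chaining Proposition~\ref{L::Ck} ($|f|_s\le (r-s)^{-n}|f|^\infty_r$), Lemma~\ref{L::UV} ($|f|^\infty_r\le (r/t)^{2^N}|f|^\infty_t$) and the $0$-bounded estimate ($|f|^\infty_t\le |f|_t$) yields $|f|_s\le (r-s)^{-n}(r/t)^{2^N}|f|_t$. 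Equivalently, and most cleanly, one re-runs the Cauchy estimate from the proof of Proposition~\ref{L::Ck} directly on the Taylor series of $f$ truncated below degree $2^N$ (this is precisely ``using Lemma~\ref{L::UV}'' in the sup-norm setting): for $z\in D_s$ one gets $|f(z)|\le |f|_t\sum_{|i|\ge 2^N}(s/t)^{|i|}$, and the truncated multi-index geometric series over $\NM^n$ is bounded, using $t\le 1$, by $(t-s)^{-n}(s/t)^{2^N}$, which is the claim.

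I expect the only real work to be this last elementary estimate — the interaction of the radius-loss constant $(t-s)^{-n}$ of Proposition~\ref{L::Ck} with the order gain of Lemma~\ref{L::UV}, equivalently the bound on the truncated geometric sum $\sum_{|i|\ge 2^N}(s/t)^{|i|}$; it is a short combinatorial computation with no conceptual content, and should one want to avoid it altogether it can be replaced by the maximum principle on the distinguished boundary of $D_s$ together with the one-variable Schwarz lemma applied along complex lines through the origin, which even removes the factor $(t-s)^{-n}$. Finally, the whole argument carries over verbatim to the spaces $C^{k,\omega}$ by applying it to the derivatives.
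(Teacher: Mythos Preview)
Your overall plan---factor through $L^{\infty,\omega}_n$, invoke Lemma~\ref{L::UV} for the $(s/t)^{2^N}$ gain, and return via Proposition~\ref{L::Ck}---is exactly what the paper intends; its entire proof is the sentence ``Using Lemma~\ref{L::UV}''. But the step you flag as a ``short combinatorial computation with no conceptual content'' is where the argument actually breaks: the inequality
\[
\sum_{|i|\ge 2^N}(s/t)^{|i|}\ \le\ (t-s)^{-n}(s/t)^{2^N}
\]
is \emph{false} for $n\ge 2$. The number of multi-indices $i\in\NM^n$ with $|i|=m$ is $\binom{m+n-1}{n-1}$, so the tail carries an extra polynomial factor in $2^N$; for $n=2$ one computes the left side to be $(s/t)^{2^N}\bigl[2^N(1-s/t)+1\bigr]\,t^2(t-s)^{-2}$, and the bracket exceeds $t^{-2}$ as soon as $2^N(t-s)>t^{-1}-t$. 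The Cauchy-coefficient estimate $|f|_s\le |f|_t\sum_{|i|\ge 2^N}(s/t)^{|i|}$ is therefore too lossy to deliver the stated constant. Your first chain through an intermediate radius $r$ is correct but gives only $(r-s)^{-n}(r/t)^{2^N}$, and no choice of $r\in(s,t)$ makes this $\le (t-s)^{-n}(s/t)^{2^N}$ (the first factor forces $r$ towards $t$, the second towards $s$).

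What saves you is the Schwarz-lemma remark you relegate to a parenthesis. For $w\in D_s$ apply the one-variable Schwarz lemma to $\lambda\mapsto f\bigl((t/s)\lambda\, w\bigr)$ on the closed unit disc: this function is holomorphic in the interior, continuous on the boundary, vanishes to order $2^N$ at $0$, and is bounded by $|f|_t$; evaluating at $\lambda=s/t$ gives $|f(w)|\le (s/t)^{2^N}|f|_t$. This proves the corollary without the factor $(t-s)^{-n}$, hence a fortiori as stated. Promote that aside to the main argument.
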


 \subsection*{Definition of $C_\a^\omega(a)_\bullet$.} 
We now fix a real positive decreasing sequence $a=(a_n)$ bounded by $1$. We
define the closed subsets
 $$\Ct(a)_n:=\{\a \in \CM^d: \s(\a)_n  \geq  a_n,\ \forall n \leq 2^n \},\ n \in \NM \cup \{ \infty \} $$
so that $\Ct(a)$ is now denoted by $\Ct(a)_\infty$.   Take $\a \in \Ct(a)$ and let $D_s(\a) \subset \CM^d$  be the closed polydisk of radius $s$ centred at $\a$.  Put $K_{n,s}=\Ct((1-s)a)_n \cap D_s(\a)$.
Note that for any $x \in  K_{n,s}$, the ball of radius $a_n/2^n (t-s)$ is contained inside $K_{n,t}$ for any $t>s$ (we will use this remark
 in the appendix).  
 
 The supremum norm induces a Banach space structure on the space $C_\a^\omega(a)_{n,s}$ of functions continuous on $K_{n,s}$ and holomorphic in its interior:
 $$C_\a^\omega(a)_{n,s}=\Ot(\mathring{K}_{n,s}) \cap C^0(K_{n,s}). $$
 
For each $n$, the inclusions $K_{n,s} \subset K_{n,t}, t>s$, $K_{n+1,s} \subset K_{n,s}$ induce a doubly directed system
$$\xymatrix{\cdots  \ar[r]& C_\a^\omega(a)_{n,t} \ar[r] \ar[d] &  C_\a^\omega(a)_{n+1,t}  \ar[d] \ar[r] & \cdots  \\
\cdots \ar[r]& C_\a^\omega(a)_{n,s}  \ar[r]   & C_\a^\omega(a)_{n+1,s}   \ar[r] & \cdots  \\  }$$
In this way, we defined the {\em Arnold space} $C_\a^\omega(a)_\bullet$. If $\a \in \CM^n$ is the origin and $a$ is the zero sequence then $$C_\a^\omega(a)_\bullet=C_n^\omega.$$
There are, of course, Arnold spaces $C_\a^{k,\omega}(a)$ obtained by replacing $C^0$-norms $|\cdot |^0_{n,s}$
by $C^k$-norms $|\cdot |^k_{n,s}$:
$$| f  |^k_{n,s}:=\max_{| j | \leq k} \frac{1}{|j|!} |\d^j f  |^0_{n,s}  $$
where $j=(j_1,\dots,j_n)$ and $|j|=j_1+j_2+\dots+j_n$.

To prove the main theorem,  we  apply the abstract KAM theorem to space of functions in the $4n$ variables $(t,\l,q,p)$:
$$E_\bullet:=C_\a^\omega(a)_\bullet \hat \otimes C^\omega_{3n} ,\ M_\bullet= E_\bullet^{(3)},\ F_\bullet:=
E_\bullet^{(3)}  \cap \left(I^2 \oplus (C^\omega_{n} \hat \otimes  C_\a^\omega(a)_\bullet  )\right) $$
and take $a=H$. The subspace $\alg_\bullet$ consists of sequences of Hamiltonian derivations. To prove the theorem,
it remains to construct a quasi-inverse $j$ to the infinitesimal action $\rho$.

 \section{Construction of the quasi-inverse}
 Let us denote by $R$ the Arnold space $C_\a^\omega(a)_\bullet  \hat \otimes C^\omega_{n}$ with coordinates functions $t_1,\dots,t_n$, $\l_1,\dots,\l_n$.
 
  We define a complement $G_\bullet$ to $F_\bullet$ as a sum of three $R$-modules
  $$G_\bullet= A_\bullet \oplus B_\bullet \oplus C_\bullet $$
with (we use multi-index notations):
\begin{enumerate}
\item  $A_\bullet=\bigoplus_{i=1}^n R (p_iq_i-\l_i) $ ;
\item $B_\bullet=\overline{\bigoplus_{I-J \neq 0} R p^I q^J}$ ;
\item   $C_\bullet:=\bigoplus_{i=1}^n (p_iq_i-\l_i) B_\bullet$.
\end{enumerate}
where $\overline{\ \cdot \ }$ denotes the closure.

According to Proposition~\ref{P::foncteur}, $F_\bullet, G_\bullet$  define a $1$-bounded splitting.  
Let us denote by $\star$ the Hadamard product for series, that is, the series obtained by taking the products coefficientwise:
$$\left(\sum_{n \geq 0} a_n z^n \right) \star \left( \sum_{n \geq 0} b_n z^n \right)= \sum_{n \geq 0} a_nb_n z^n $$
 \begin{lemma}
 \label{L::Hadamard} 
The Hadamard products
$$ \star h_\bullet:  R \hat \otimes  L^{2,\omega}_n \to  R \hat \otimes  L^{2,\omega}_n$$
with the functions
$$ h_k(\a,q,p):= \sum_{\| i-j\|=1}^{2^k} \frac{1}{ (\a,i-j)} q^ip^j,\ a_i \in \CM $$
define a $0$-tamed morphism whose norm is bounded from above by the sequence $a^{-1}$.
\end{lemma}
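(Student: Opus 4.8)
The plan is to estimate, uniformly over $\a \in K_{n,s}$, the operator defined by Hadamard multiplication against $h_k$ and to show it loses no Kolmogorov radius (i.e. is $0$-tamed) while its norm is controlled by $a_k^{-1}$. The starting observation is that on the Hilbert-type Kolmogorov space $L^{2,\omega}_n$, Hadamard multiplication by a family of complex scalars $(\mu_{ij})$ indexed by the monomials acts diagonally in the orthogonal monomial basis, hence its operator norm on $(L^{2,\omega}_n)_s$ is exactly $\sup_{ij}|\mu_{ij}|$; the radius $s$ plays no role, which is precisely why the morphism will be $0$-bounded. The parameter dependence on $\a$ enters only through the scalars $\mu_{ij}=\mu_{ij}(\a)=1/(\a,i-j)$, so after tensoring with $R$ the bound becomes a supremum over $\a \in K_{n,s}$ as well.

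First I would make the diagonal structure precise: for $f = \sum_{i,j} c_{ij}(\a)\, q^i p^j \in R \hat\otimes L^{2,\omega}_n$, one has $(\star h_k)(f) = \sum_{\|i-j\|\le 2^k} \frac{c_{ij}(\a)}{(\a,i-j)} q^i p^j$, truncated to $\|i-j\|\ge 1$. Since the $q^i p^j$ are pairwise orthogonal in each $(L^{2,\omega}_n)_s$, the $L^{2,\omega}_n$-norm of the output is bounded by $\bigl(\sup_{1\le\|i-j\|\le 2^k}\sup_{\a\in K_{n,s}} |(\a,i-j)|^{-1}\bigr)$ times the norm of $f$, and the same holds after the $\hat\otimes R$ completion because $R$ is a Banach algebra factor in each slice and the estimate is slicewise uniform. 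Next I would invoke the definition of $K_{n,s} = \Ct((1-s)a)_n \cap D_s(\a)$: for any $\a \in K_{n,s}$ and any $i \in \ZM^n\setminus\{0\}$ with $\|i\|\le 2^n$ one has $|(\a,i)| \ge \s(\a)_k \ge (1-s)a_k \ge \tfrac12 a_k$ for $s \le 1/2$, using that $a$ is decreasing and that $\|i-j\|\le 2^k$ with $k\le n$ puts $i-j$ in the allowed range. Hence $\sup_{\a}|(\a,i-j)|^{-1} \le 2 a_k^{-1}$, and absorbing the harmless factor $2$ (or tracking it as part of the tameness constant) gives $N^0_s(\star h_k) \le a_k^{-1}$ up to a universal constant, uniformly in $s$, which is exactly the assertion that $\star h_\bullet$ is $0$-tamed by the sequence $a^{-1}$.

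The main obstacle I anticipate is bookkeeping rather than conceptual: one must check that the diagonal (and hence norm-nonincreasing up to the scalar factor) behaviour genuinely survives passage to the topological tensor product with $R$ and to the doubly directed Arnold-space structure — i.e. that "$0$-tamed'' in the sense of \cite{Abstract_KAM} is verified level by level in both the radius index $s$ and the arithmetic index $n$, and that the truncation at $\|i-j\|\le 2^k$ is compatible with the filtration conventions used when this lemma is later fed into the KAM machinery. A secondary point to handle carefully is the small-divisor lower bound at the boundary of $K_{n,s}$: the inclusion $\Ct((1-s)a)_n \subset \Ct(a)$ in the limit and the use of the decreasing hypothesis on $a$ must be spelled out so that the bound $a_k^{-1}$ (and not some $s$-dependent worsening) is what appears. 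Once these are settled, the proof is essentially the orthogonality computation above plus the arithmetic inequality.
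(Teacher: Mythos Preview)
Your approach is essentially the same as the paper's: both use the orthogonality of the monomials $q^ip^j$ in $L^{2,\omega}_n$ to bound the operator norm of $\star h_k$ by $\sup_{1\le\|i-j\|\le 2^k}|(\a,i-j)|^{-1}$, and then invoke $\s(\a)_k \geq a_k$ from the definition of the arithmetic class. The paper's three-line proof glosses over exactly the two bookkeeping issues you flag as obstacles (the passage to $R\,\hat\otimes\,L^{2,\omega}_n$ and the $(1-s)$ factor coming from $K_{n,s}=\Ct((1-s)a)_n\cap D_s(\a)$), so your caution there is warranted but does not reflect a different method.
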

 \begin{proof}
Write 
$$f=\sum_{i \geq 0} f_{ij} q^ip^j,\ f_{ij} \in R. $$ 
We have
$$ f \star h_k(q,p) =  \sum_{\| i-j\|=1}^{2^k}  \frac{f_{ij}}{ (\a,i-j)}q^ip^j   ,$$
thus
$$  | f \star h_k |_{k,s} \leq \frac{1}{\s(\a)_k}\sqrt{   \sum_{\| i-j\|=1}^{2^k} |f_{ij}|_{k,s}^2 |q^ip^j |_s } \leq  \frac{1}{a_k}| f |_{k,s}.$$
This proves the lemma.
\end{proof}
In the decomposition $A_\bullet \oplus B_\bullet \oplus C_\bullet$, the operator $\rho_\bullet(f)$ admits the  lower triangular decomposition
$$\begin{pmatrix}   \Id & 0 & 0 \\ 0 &   \star g & 0 \\ 0 &  \{ -,f \} &  \star g  \\  \end{pmatrix} $$
with
$$g:=\sum_{i \neq j}(\a+t,(i-j)) q^i p^j.  $$

As the matrix is lower triangular, we can compute the inverse explicitly over formal power series. We truncate these series and define a right
quasi-inverse $j_\bullet(f)$ to $\rho(f)$ by putting
$$j_k(f):=\begin{pmatrix}   \Id & 0 & 0 \\ 0 & \mu_k& 0 \\ 0 & \mu_k  \{- ,  f \} \mu_k &   \mu_k  \end{pmatrix} $$
where $\mu_k=- \star h_k$.
Corollary \ref{C::foncteur} shows that it is indeed a quasi-inverse to $\rho(f)$.

 Lemma \ref{L::Hadamard} and Proposition \ref{P::foncteur} imply that the Hadamard product with $h_\bullet$ is $1$-tamed with norm  bounded from above by the sequence $a^{-1}$.  This shows that our quasi-inverse satisfies condition A) of the abstract KAM theorem.

Let us now check condition B).  The map
$$f \mapsto j_k(f) $$  involves first order derivatives of $f$.  Due to Cauchy inequalities, it is therefore $1$-tamed. This shows condition B) of the abstract KAM theorem and concludes the proof of the theorem.
\section*{Appendix :  Regularity in $C^{l,\omega}_\a(a)$. }
We generalise the regularity of KAM tori discovered  by P\"oschel~\cite{Poeschel_KAM}. Gevrey regularity can be treated in a similar way.

The identity morphism
$$I:C^{l,\omega}_\a(a) \to C^{0,\omega}_\a(a) $$
is $0$-bounded with norm at most one. The Cauchy inequalities give a partial converse~:
\begin{lemma}
\label{L::Cauchy} The identity morphism
$$I^{-1}:C^{0,\omega}_\a(a) \to C^{l,\omega}_\a(a) $$ is $l$-bounded and its norm is bounded by the sequence $ (2^{ln}a_n^{-l}) $
\end{lemma}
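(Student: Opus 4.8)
The plan is to deduce the $l$-boundedness of $I^{-1}:C^{0,\omega}_\a(a) \to C^{l,\omega}_\a(a)$ directly from Cauchy's integral formula, adapting the argument of Proposition~\ref{L::Ck} to the geometry of the sets $K_{n,s}$. Recall that the $C^l$-norm in question is $| f |^l_{n,s}=\max_{|j|\leq l}\frac{1}{|j|!}|\d^j f|^0_{n,s}$, so it suffices to bound each $|\d^j f|^0_{n,s}$ for $|j|\leq l$ in terms of $|f|^0_{n,t}$ with a factor controlled by $(t-s)$ and $a_n$.

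The key geometric input is the remark made just after the definition of $K_{n,s}$: for any $x\in K_{n,s}$, the polydisk (or ball) of radius $a_n/2^n\,(t-s)$ around $x$ is contained in $K_{n,t}$. So I would fix $x\in K_{n,s}$ and a multi-index $j$ with $|j|\leq l$, and apply the iterated Cauchy formula on the polydisk $\Gamma_x$ centred at $x$ with polyradius $\rho:=a_n/2^n\,(t-s)$ in each coordinate:
$$\d^j f(x)=\frac{j!}{(2\pi i)^n}\int_{\d\Gamma_x}\frac{f(\zeta)\,d\zeta}{(\zeta-x)^{j+\1}},$$
which gives the bound $|\d^j f(x)|\leq j!\,\rho^{-|j|}\,|f|^0_{n,t}$. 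Dividing by $|j|!=j!$ up to the multinomial coefficient — more precisely, bounding the multinomial coefficient $|j|!/j!$ by $n^{|j|}\leq n^l$ — and using $\rho^{-|j|}\leq (2^n/a_n)^l(t-s)^{-l}$ since $a_n\leq 1$ and $|j|\leq l$, one obtains
$$| f |^l_{n,s}\leq \frac{2^{ln}}{a_n^{l}}\,\frac{1}{(t-s)^l}\,| f |^0_{n,t},$$
which is exactly the assertion that $I^{-1}$ is $l$-bounded with norm bounded by the sequence $(2^{ln}a_n^{-l})$, in the sense of the tamed-norm conventions of~\cite{Abstract_KAM}. The compatibility with the doubly-directed structure (the second index $n$) is automatic, since the estimate is uniform in the inclusions $K_{n+1,s}\subset K_{n,s}$.

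The only real obstacle is bookkeeping: matching the constant exactly to $2^{ln}a_n^{-l}$ requires being careful about how the polyradius $\rho$ enters (one power of $(t-s)$ per derivative versus the single shift $t-s$), about whether the Cauchy estimate is taken on a polydisk or a Euclidean ball as in Proposition~\ref{P::foncteur}, and about absorbing the combinatorial factor $|j|!/j!\leq n^{|j|}$ into the $2^{ln}$ — here one uses $n\leq 2^n$. None of this is deep; it is the same mechanism as Propositions~\ref{P::foncteur} and~\ref{L::Ck}, now localised at each point of $K_{n,s}$ using the explicit size of the shrinking neighbourhoods. Since Gevrey regularity is claimed to follow identically, the proof should be written so the dependence on $l$ is transparent, e.g. by keeping track of the factor $(2^n/a_n)^l$ as a function of $l$.
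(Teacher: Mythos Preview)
Your argument is essentially identical to the paper's: use the remark that for $x\in K_{n,s}$ a polydisk of radius $\rho=a_n/2^n\,(t-s)$ lies in $K_{n,t}$, then apply the Cauchy inequalities. One small simplification: since the Cauchy estimate gives a factor $j!$ and the norm divides by $|j|!$, the ratio $j!/|j|!\leq 1$ already goes the right way, so no multinomial factor needs to be absorbed into $2^{ln}$.
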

\begin{proof}
Take $s<t $ and $f \in (C^{l,\omega}_\a(a))_{n,t}$ . 
Put $K_{n,s}=\Ct((1-s)a)_n \cap D_s(\a)$.  For any $x \in  K_{n,t}$, the ball of radius $a_n/2^n (t-s)$ is contained inside $K_{n,s}$. Thus, the Cauchy inequalities imply that
$$  | I(f) |_{ s} \leq  \frac{2^{ln}}{(t-s)^l  a_n^l}   | f |_t .$$
This proves the lemma.

\end{proof} 
In particular, any $k$-bounded $u \in \Bt^k_\tau( C^{0,\omega})$  induces a $(k+l)$-bounded
morphism  $v  \in \Bt^{k+l}_\tau( C^{l,\omega}_\a(a))$ 
for which there is a commutative diagram
$$\xymatrix{ C^{l,\omega}_\a(a) \ar[r]^v \ar[d]^{I} & C^{l,\omega}_\a(a)  \ar[d]^{I}\\
C^{0,\omega}_\a(a) \ar[r]^u & C^{0,\omega}_\a(a) 
}$$
\begin{corollary}
\label{C::Cauchy}Let $a=(a_n)$ be a positive decreasing sequence and $l$ a positive real number. For any $k$-bounded $\tau $-morphism 
$u \in \Bt^k_\tau( C^{0,\omega}(a))$, 
 the norm of the induced morphism $v  \in \Bt^{k+l}_\tau( C^{l,\omega}_\a(a))$ satisfies the estimate
$$N^{k+l}_\tau(v_n) \leq \frac{2^{(k+l)(n+1)}}{a_n^{k+l}}N^{k}_\tau(u_n)  $$
for any $n \geq 0$.
\end{corollary}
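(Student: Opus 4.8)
The plan is to deduce Corollary~\ref{C::Cauchy} directly from Lemma~\ref{L::Cauchy} by functoriality of the norm under composition, together with the bookkeeping relating the index $n$ appearing in the doubly directed system to the harmonic parameter $\tau$. First I would recall that, given a $k$-bounded $\tau$-morphism $u \in \Bt^k_\tau(C^{0,\omega}(a))$, Lemma~\ref{L::Cauchy} furnishes the two identity morphisms
$$ I : C^{l,\omega}_\a(a) \to C^{0,\omega}_\a(a), \qquad I^{-1} : C^{0,\omega}_\a(a) \to C^{l,\omega}_\a(a), $$
the first $0$-bounded of norm $\leq 1$, the second $l$-bounded with norm controlled by the sequence $(2^{ln} a_n^{-l})$. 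The induced morphism $v$ is then defined by the composite $v = I^{-1} \circ u \circ I$, which makes the stated commutative square commute by construction; since $u$ preserves the relevant closed subspaces and commutes with the directed-system maps, $v$ is well defined as a morphism of the Arnold space $C^{l,\omega}_\a(a)$.

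The key step is then the submultiplicativity of the tamed norms under composition: for composable morphisms the loss-of-regularity indices add and the norms at a common scale multiply (this is the standard estimate for tamed morphisms on Kolmogorov/Arnold spaces used throughout~\cite{Abstract_KAM}). Applying this twice to $v = I^{-1} \circ u \circ I$ gives that $v$ is $(0 + k + l)$-bounded and, at level $n$ and scale $\tau$,
$$ N^{k+l}_\tau(v_n) \leq N^{l}_{\tau}\big((I^{-1})_n\big)\, N^{k}_\tau(u_n)\, N^{0}_\tau(I_n) \leq \frac{2^{l(n+1)}}{a_n^{l}} \cdot N^{k}_\tau(u_n) \cdot 1, $$
where I have used $N^0_\tau(I_n) \leq 1$ and the bound on $N^l_\tau((I^{-1})_n)$ coming from Lemma~\ref{L::Cauchy}, but evaluated at the level $n+1$ of the doubly directed system: the point is that the Cauchy estimate in Lemma~\ref{L::Cauchy} compares scale $s$ on $K_{n,s}$ with scale $t$ on $K_{n,t}$, and the safety ball of radius $a_n/2^n(t-s)$ forces one to pass from the $n$-th to the $(n+1)$-st term of the system when chaining three maps, producing the exponent $n+1$ rather than $n$ and hence the factor $2^{(k+l)(n+1)}$ after absorbing the $2^{kn}$ implicit in $N^k_\tau(u_n)$ into the larger power. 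Collecting the powers of $2$ and of $a_n$ yields exactly
$$ N^{k+l}_\tau(v_n) \leq \frac{2^{(k+l)(n+1)}}{a_n^{k+l}} N^{k}_\tau(u_n). $$

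The main obstacle, such as it is, is purely combinatorial: keeping straight the two directions of the doubly directed system (the scale direction $s < t$ and the arithmetic-truncation direction $n \mapsto n+1$) so that the shift in the index and the accumulation of the constants $2^{(\cdot)n}$ and $a_n^{-(\cdot)}$ come out with the claimed exponents, rather than off by one in $n$ or by a factor in the power. There is no analytic difficulty beyond the Cauchy inequalities already packaged in Lemma~\ref{L::Cauchy}; everything else is the functorial composition of tamed norms. Finally I would remark that the same argument applies verbatim with $C^l$ replaced by a Gevrey scale, replacing the polynomial weight $2^{ln}a_n^{-l}$ by the corresponding Gevrey weight, which is the generalisation alluded to in the text.
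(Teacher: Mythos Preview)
Your overall strategy---writing $v = I^{-1}\circ u\circ I$ and estimating the composite---is exactly what the paper does. But your accounting for the constants is wrong, and the explanation you give for the discrepancy between your intermediate bound $\dfrac{2^{l(n+1)}}{a_n^{l}}N^{k}_\tau(u_n)$ and the stated bound $\dfrac{2^{(k+l)(n+1)}}{a_n^{k+l}}N^{k}_\tau(u_n)$ is incorrect.

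There is \emph{no} passage from level $n$ to level $n+1$ in the doubly directed system: the whole argument takes place at the fixed index $n$. The exponent $n+1$ in $2^{l(n+1)}$ does not come from an index shift, and there is nothing ``implicit'' in $N^k_\tau(u_n)$ to absorb. What actually happens is elementary: one splits the interval $[s,t]$ at its midpoint $s+\sigma$ with $\sigma=(t-s)/2$, applies the $l$-bounded estimate for $I^{-1}$ on $[s,s+\sigma]$ and the $k$-bounded estimate for $u$ on $[s+\sigma,t]$. Each halving turns $(t-s)^{-l}$ into $2^{l}(t-s)^{-l}$ and $(t-s)^{-k}$ into $2^{k}(t-s)^{-k}$; this is the sole source of the extra powers of $2$ beyond the $2^{ln}$ already in Lemma~\ref{L::Cauchy}. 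Concretely,
\[
|I^{-1}uI(f)|_{s}\;\le\;\frac{2^{l(n+1)}}{(t-s)^{l}a_n^{l}}\,|uI(f)|_{s+\sigma}
\;\le\;\frac{2^{l(n+1)}}{(t-s)^{l}a_n^{l}}\cdot\frac{2^{k(n+1)}}{(t-s)^{k}a_n^{k}}\,N^{k}_\tau(u_n)\,|f|_{t},
\]
which is precisely the paper's computation. Your remark about the ``safety ball of radius $a_n/2^n(t-s)$ forcing a shift in $n$'' is a misreading: that ball is used in the proof of Lemma~\ref{L::Cauchy} to justify the Cauchy inequality on $K_{n,s}\subset K_{n,t}$, not to move along the $n$-direction.

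In short: replace the appeal to an abstract submultiplicativity (whose constants you do not control) and the spurious index-shift story by the explicit midpoint splitting; then the bound drops out in two lines.
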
 
\begin{proof}
 Take $s<t \leq \tau $ and $f \in (C^{l,\omega}_\a(a))_{n,t}$ . 
We cut the interval $t-s$ into two equal pieces and write $v=I^{-1} uI$. The previous lemma shows that:
$$  | I^{-1} uI(f) |_{ s} \leq  \frac{2^{l(n+1)}}{(t-s)^l  a_n^l}   |uI( f) |_{s+\s} $$
with $\s=(t-s)/2$.  As $u$ is $k$-bounded, we have that
$$    | uI(f) |_{s+\s}  \leq  \frac{2^{k(n+1)}}{(t-s)^k  a_n^k}N^k_t(u_n)    | I(f) |_t$$
As $I$ is $0$-bounded with norm $1$, we have 
$$| I(f) |_t \leq | f|_t.$$
 This proves the corollary.
\end{proof}
 \begin{corollary} Let $a=(a_n)$ be a decreasing positive sequence and $u$ a $k$-bounded $\tau$ morphism of $C^{0,\omega}_\a(a)$, for some $\tau>0$. Assume that $(N^k_\tau(u_n))$ decreases faster than any power
 of $(2^{-n}a_n)$ :
 $$N^k_\tau(u_n)=o(2^{-jn}a_n^j), \forall j>0.$$ For any $l>0$, the norm of the morphism 
 $$v: C^{l,\omega}_\a(a) \to C^{l,\omega}_\a(a) $$  induced by $u$ has the same property:
 $$N^k_\tau(v_n)=o(2^{-jn}a_n^j),\ \forall j>0.$$
  \end{corollary}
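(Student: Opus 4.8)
The plan is to derive this final corollary directly from Corollary~\ref{C::Cauchy}, which already provides the quantitative comparison between the $C^{0,\omega}_\a(a)$ norm of $u_n$ and the $C^{l,\omega}_\a(a)$ norm of the induced morphism $v_n$. Indeed, Corollary~\ref{C::Cauchy} gives
$$N^{k+l}_\tau(v_n) \leq \frac{2^{(k+l)(n+1)}}{a_n^{k+l}}N^{k}_\tau(u_n),$$
so it suffices to show that multiplying the rapidly decreasing sequence $(N^k_\tau(u_n))$ by the factor $2^{(k+l)(n+1)}a_n^{-(k+l)}$ preserves the property of being $o(2^{-jn}a_n^j)$ for every $j>0$. (There is a minor notational point: the statement writes $N^k_\tau(v_n)$ for the norm of $v_n$, but since $v$ is $(k+l)$-bounded one should read this as $N^{k+l}_\tau(v_n)$; I would either adjust the index or simply invoke the $(k+l)$-bounded estimate, which is what Corollary~\ref{C::Cauchy} delivers.)

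First I would fix an arbitrary $j>0$ and set $j' = j + k + l$. By hypothesis $N^k_\tau(u_n) = o\bigl(2^{-j'n}a_n^{j'}\bigr)$, i.e. $2^{j'n}a_n^{-j'}N^k_\tau(u_n) \to 0$. Then, using the estimate from Corollary~\ref{C::Cauchy} and the fact that $a_n \leq 1$ so that $a_n^{-(k+l)} \leq a_n^{-(k+l)}$ combines cleanly with the powers of $2$,
$$2^{jn}a_n^{-j}\,N^{k+l}_\tau(v_n) \leq 2^{jn}a_n^{-j}\cdot\frac{2^{(k+l)(n+1)}}{a_n^{k+l}}\,N^k_\tau(u_n) = 2^{(k+l)}\cdot 2^{(j+k+l)n}a_n^{-(j+k+l)}\,N^k_\tau(u_n),$$
and the right-hand side is a fixed constant $2^{k+l}$ times $2^{j'n}a_n^{-j'}N^k_\tau(u_n)$, which tends to $0$. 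Hence $N^{k+l}_\tau(v_n) = o(2^{-jn}a_n^j)$, and since $j>0$ was arbitrary the claim follows.

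The argument is essentially bookkeeping with exponents, and there is no serious obstacle; the only thing one must be careful about is that the loss incurred in passing from $C^{0,\omega}$ to $C^{l,\omega}$ — a factor polynomial in $2^n$ and in $a_n^{-1}$ — is harmless precisely because the hypothesis grants decay faster than \emph{every} power of $2^{-n}a_n$, so one has arbitrarily many powers to spare. In writing it up I would state the computation as a single display chain as above, absorbing the constant $2^{k+l}$ at the end, and remark that the same reasoning applies verbatim to the Gevrey-weighted Arnold spaces mentioned at the start of the appendix, since the only input used is the polynomial-in-$(2^n a_n^{-1})$ nature of the comparison constant in Corollary~\ref{C::Cauchy}.
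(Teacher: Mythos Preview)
Your argument is correct and is exactly the intended one: the paper states this corollary without proof, as an immediate consequence of Corollary~\ref{C::Cauchy}, and your exponent bookkeeping with $j'=j+k+l$ is precisely the verification that makes this immediate. Your observation about the index discrepancy ($N^k_\tau(v_n)$ versus $N^{k+l}_\tau(v_n)$) is also well taken.
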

This shows that the invariant lagrangian varieties obtained by applying the abstract KAM theorem have a $C^\infty$ dependence on parameters.

 
 \bibliographystyle{amsplain}
\bibliography{master}
 \end{document}